\theoremstyle{definition}
\newtheorem{theorem}{Theorem}[section]
\newtheorem{definition}[theorem]{Definition}
\newtheorem{proposition}[theorem]{Proposition}
\newtheorem{corollary}[theorem]{Corollary}
\newtheorem{remark}[theorem]{Remark}
\newtheorem{lemma}[theorem]{Lemma}
\newtheorem{assumption}[theorem]{Assumption}
\pgfplotsset{compat=1.18}
\pgfplotsset{%
    layers/standard/.define layer set={%
        background,axis background,axis grid,axis ticks,axis lines,axis tick labels,pre main,main,axis descriptions,axis foreground%
    }{
        grid style={/pgfplots/on layer=axis grid},%
        tick style={/pgfplots/on layer=axis ticks},%
        axis line style={/pgfplots/on layer=axis lines},%
        label style={/pgfplots/on layer=axis descriptions},%
        legend style={/pgfplots/on layer=axis descriptions},%
        title style={/pgfplots/on layer=axis descriptions},%
        colorbar style={/pgfplots/on layer=axis descriptions},%
        ticklabel style={/pgfplots/on layer=axis tick labels},%
        axis background@ style={/pgfplots/on layer=axis background},%
        3d box foreground style={/pgfplots/on layer=axis foreground},%
    },
}
\providecommand{\sep}{, }
\providecommand{\keywords}[1]{\textbf{\textit{Keywords:}} #1}
\newcommand{\setto}{\rightrightarrows}
\newcommand{\lparan}{\symbol{40}}
\newcommand{\rparan}{\symbol{41}}
\newcommand{\bbR}{\mathbb{R}}
\newcommand{\bbB}{\mathbb{B}}
\newcommand{\bbN}{\mathbb{N}}
\newcommand{\bbC}{\mathbb{C}}
\newcommand{\bbE}{\mathbb{E}}
\newcommand{\bbP}{\mathbb{P}}
\newcommand{\bbH}{\mathbb{H}}
\newcommand{\bbRn}{\mathbb{R}^n}
\newcommand{\bbRnxn}{\mathbb{R}^{n {\times} n}}
\newcommand{\xbar}{\bar{x}}
\newcommand*{\diff}{\mathop{}\!\mathrm{d}}
\newcommand{\aseq}{\quad \text{(a.s.)}}
\DeclareMathOperator{\dist}{dist}
\DeclareMathOperator{\GL}{Gl}
\DeclareMathOperator{\tr}{tr}
\DeclareMathOperator{\SO}{SO}
\DeclareMathOperator{\Unif}{Unif}
\DeclareMathOperator{\Id}{I}
\DeclareMathOperator{\proj}{proj}
\DeclareMathOperator*{\argmin}{argmin}
\DeclareMathOperator*{\minim}{minimize}
\newcommand{\minprob}[2]{\begin{array}{ll}%
  \displaystyle{\minim_{#1}}\quad& #2 \\%
  \end{array}}
\author{Titus Pinta}
\title{A Stochastic Newton-type Method for Non-smooth Optimization}
\begin{document}
\maketitle
\begin{abstract}
  We introduce a new framework for analyzing \lparan{}Quasi-\rparan{}Newton type methods
  applied to non-smooth optimization problems. The source of randomness comes
  from the evaluation of the (approximation) of the Hessian. We derive, using
  a variant of Chernoff bounds for stopping times,
  expectation and probability bounds for the random variable representing the number
  of iterations of the algorithm until approximate first order optimality
  conditions are validated.
  As an important distinction to previous results in the literature, we do
  not require that the estimator is unbiased or that it has finite variance.
  We then showcase our theoretical results in a stochastic Quasi-Newton method
  for X-ray free electron laser orbital tomography and in a sketched Newton
  method for image denoising.
\end{abstract}

\keywords{Newton's Method\sep%
  Stochastic Optimization\sep%
  Higher Order Methods\sep%
  Nonsmooth Optimization\sep%
  Subspace Methods}

\section{Introduction}
Stochastic optimization is a critical approach to numerous problems of current
interests. The randomness can come from sampling large data sets, as in
machine learning, or from inherent properties of the physical systems
modeled, as in quantum mechanical simulations. While for deterministic
and unconstrained problems, (semismooth) Newton's method reigns supreme in
terms of speed
of convergence and quasi-Newton methods shine due to the performance per
cost ratio, it seems that the stochastic optimization community has focused
primarily on gradient descent based approaches. The aim of this work is
to bridge this gap and to bring the tools of non-smooth \lparan{}Quasi-\rparan{}Newton-type
methods to the world of stochastic optimization.
Numerical experiments in tomography and image denoising confirm the validity and
applicability of the proposed approach.

Frameworks for the analysis of stochastic \lparan{}Quasi-\rparan{}Newton-type methods applied
to non-smooth and non-convex problems have been
studied and developed in~\cite{CaoBerSch24Firs,
  HaiLuoZhi21App,YanMilWenZha22Asto,MilXiaWenUlb22Onth,
  MilXIaCenWenUlb19Asto,BulPatKshShaSreNatWoo21Asto,NaDerMah23Hess}.

In our work, we are interested in a general optimization problem,
\begin{equation*}
  \min_{x \in U \subseteq \bbRn} f(x),
\end{equation*}
with a (nonsmooth) objective, $f:\bbRn \to \bbR$.
The difficulty stems from the fact that
higher order information such as Hessian, directional derivatives, or second
order Clarke
subdifferential, is only accessible via a possibly biased stochastic oracle.
Such examples
abound in the sciences, where the objective can be constructed statistically
from large datasets or where the randomness is intrinsic in physical systems.
Another source of randomness comes from stochastic approximations
of the higher order information as encountered in sketching methods, based
on the Johnson-Lindenstrauss Embedding Lemma.

For this purpose, we propose an algorithm based on a backtracking strategy
for accepting a step.
The final section of this work presents some concrete practical applications
of our algorithm.

In order to fully analyze such frameworks, the machinery of stochastic
processes has to be involved. In the rest of this section, we recall some
of the key aspects of stochastic processes required in the rest of this work.
The second section deals with the notion of regularity that is behind our
analysis and derives some properties of the Newton-like update, under this
regularity assumption. The third section assembles everything together into
our main result.

\subsection{Stochastics}
We assume that the reader is familiar with the standard notions of
probability theory, such as random variables, expected values, conditional
expectations and so on. For a standard reference covering these topics,
the reader should consult~\cite{Dur90Prob}.

In this work, we consider the standard Euclidean
spaces equipped with the standard Borel sigma algebra, $\sigma$ and the standard
cylindrical sigma algebra defined on the space of functions between Euclidean
spaces (so a random function is a measurable function, with respect to the
cylindrical sigma algebra). For the definitions
in this section, $\pi$ is an arbitrary probability measure.
To set the stage and to fix some notation, we provide the following definitions.
\begin{definition}
  A function $X:\bbN \to (\bbRn, \sigma, \pi)$, usually denoted as a sequence ${\{X^k\}}_{k \in \bbN}$,
  is called a {\em stochastic process}. A random
  variable, $K \in \bbN$, is called a {\em stopping time\/} for the process $X$ if for
  any $k \in \bbN$, the random variable $I\{K > k\}$ is independent from any $X^{n}$
  with $n \ge k$.
\end{definition}
The following properties of a special class of stopping times are well known.
\begin{proposition}[Hitting Times are Stopping Times]\label{prop:hitting times}
  Let ${\{T^k\}}_{k \in \bbN} \in {\lbrack}0, \infty{\rparan}$ be a sequence of independent random variables.
  Let $\varepsilon > 0$ and assume that $\bbE(T^k) < \infty$ and $\bbE(T^k) > \varepsilon$ for all $k$.
  Consider the stochastic process ${\{S^k\}}_{k \in \bbN}$ defined by
  \begin{equation*}
    S^n = \sum_{k = 0}^{n - 1}T^k.
  \end{equation*}
  Then $K$, defined by
  \begin{equation*}
    K = \min \{n \in \bbN~|~S^n \ge \alpha\},
  \end{equation*}
  is a stopping time for $S^k$ and for any $\alpha \ge 0$. Furthermore $\bbE(K) < \infty$.
  Such a stopping time is called a {\em hitting time}.
\end{proposition}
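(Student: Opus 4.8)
The plan is to treat the two assertions separately. The stopping-time property is a short measurability argument built on the independence of the $T^k$; finiteness of $\bbE(K)$ is the substantive part and will follow from an exponential-moment (Chernoff-type) bound on $S^k$.

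\emph{Stopping time.} Since the $T^j$ are nonnegative the partial sums $S^0\le S^1\le S^2\le\cdots$ are nondecreasing, so for every $k$ and every $\alpha\ge 0$
\begin{equation*}
  \{K>k\}=\bigcap_{n=0}^{k}\{S^n<\alpha\}=\{S^k<\alpha\}.
\end{equation*}
Thus $I\{K>k\}=I\{S^k<\alpha\}$ is a Borel function of $(T^0,\dots,T^{k-1})$ alone, hence measurable with respect to $\sigma(T^0,\dots,T^{k-1})$; by independence of the $T^j$ this $\sigma$-algebra is independent of $\sigma(T^k,T^{k+1},\dots)$, so $I\{K>k\}$ is independent of $T^n$ for every $n\ge k$, as required.

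\emph{Finiteness of $\bbE(K)$.} Since $\{K>k\}=\{S^k<\alpha\}$ we have $\bbE(K)=\sum_{k\ge 0}\bbP(K>k)=\sum_{k\ge 0}\bbP(S^k<\alpha)$, so it suffices to bound this series. Fix $\lambda>0$; Markov's inequality applied to $e^{-\lambda S^k}$, combined with independence of the $T^j$, gives
\begin{equation*}
  \bbP(S^k<\alpha)\le e^{\lambda\alpha}\,\bbE\bigl[e^{-\lambda S^k}\bigr]=e^{\lambda\alpha}\prod_{j=0}^{k-1}\bbE\bigl[e^{-\lambda T^j}\bigr].
\end{equation*}
The crux is to produce $\lambda>0$ and $\rho\in(0,1)$, \emph{independent of $j$}, with $\bbE[e^{-\lambda T^j}]\le\rho$ for all $j$; then $\bbP(S^k<\alpha)\le e^{\lambda\alpha}\rho^{k}$ and $\bbE(K)\le e^{\lambda\alpha}/(1-\rho)<\infty$. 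Such a $\rho$ follows from a uniform lower-tail bound $\bbP(T^j\ge\delta)\ge p$ with fixed $\delta,p>0$, since then $\bbE[e^{-\lambda T^j}]\le 1-p(1-e^{-\lambda\delta})=:\rho<1$ for any $\lambda>0$. Equivalently, one can bound $S^k$ from below by $\delta$ times a $\mathrm{Bin}(k,p)$ random variable and read off the sharper estimate $\bbE(K)\le\lceil\alpha/\delta\rceil/p$ from the mean of a negative-binomial waiting time; the Chernoff tail of the binomial reproduces the geometric decay above.

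The main obstacle is precisely this uniform lower-tail estimate, which does \emph{not} follow from $\bbE(T^j)>\varepsilon$ by itself: $T^j$ may place its excess mass on a tiny atom at a value $c_j\to\infty$ (e.g. $T^j=c_j$ with probability $2\varepsilon/c_j$, else $0$), so that $\bbP(T^j\ge\delta)\to 0$, and if moreover $\sum_j 1/c_j<\infty$ then Borel--Cantelli gives $\sum_j T^j<\infty$ with positive probability, forcing $\bbE(K)=\infty$. Thus one needs an additional uniform-integrability-type hypothesis on $\{T^j\}$; the simplest is a uniform bound $T^j\le C$, which yields $\bbP(T^j\ge\varepsilon/2)\ge\varepsilon/(2C)$ at once from $\varepsilon<\bbE(T^j)\le\varepsilon/2+C\,\bbP(T^j\ge\varepsilon/2)$. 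I would expect such a bound to be available (and implicitly used) in the setting where this proposition is applied.
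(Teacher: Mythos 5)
The paper never proves this proposition (it is invoked as ``well known''), so there is no in-paper argument to compare against; judged on its own, your proof of the stopping-time half is correct and is the natural one: monotonicity of the partial sums gives $\{K>k\}=\{S^k<\alpha\}$, which is a function of $T^0,\dots,T^{k-1}$ and hence independent of $T^n$ for $n\ge k$. (Note you are implicitly reading the paper's definition as independence from the future \emph{increments}; taken literally, independence of $I\{K>k\}$ from $S^n$ for $n\ge k$ fails even here, but your reading is the only one consistent with how the paper later applies Doob's theorem, so this is a defect of the definition, not of your argument.)

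Your second observation is the substantive one, and it is right: the hypotheses $\bbE(T^k)<\infty$, $\bbE(T^k)>\varepsilon$, independence, do \emph{not} imply $\bbE(K)<\infty$, and your counterexample works --- with $T^j=c_j$ with probability $2\varepsilon/c_j$ and $0$ otherwise, $\sum_j 1/c_j<\infty$ gives $\prod_j(1-2\varepsilon/c_j)>0$, so with positive probability every $T^j=0$ and $K=\infty$ for any $\alpha>0$. So the proposition as stated is false and needs a uniform lower-tail (or uniform-integrability/boundedness) hypothesis, exactly as you say. Your suspicion about the intended setting is also correct: in the only place the paper uses the result (Theorem~3.3), the $T^k$ are Bernoulli with $\bbE(T^k)\ge 1-\delta\ge\tfrac12$, so $T^k\le 1$ and $\bbP(T^k\ge 1)\ge 1-\delta$, and your exponential-moment (or negative-binomial) argument then delivers $\bbE(K)<\infty$ with an explicit bound of order $\lceil\alpha\rceil/(1-\delta)$, which is consistent with the bound $\bbE(K)\le\bbE(T)/(1-\delta)$ the paper actually uses. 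In short: your proof is correct modulo the extra hypothesis, and the flaw you identify is a genuine gap in the paper's statement rather than in your argument; it would be worth restating the proposition with, e.g., $T^k\in[0,1]$ or a uniform bound $\bbP(T^k\ge\delta)\ge p$.
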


The first statement we recall is a classic result from martingale theory, namely
Doob's optional stopping theorem (see~\cite{GriSti01Prob} for the proof).
\begin{theorem}[Doob's Optional Stopping Theorem]
  Let ${\{X^k\}}_{k \in \bbN} \ge 0$ be a stochastic process with $\bbE(X^k) < \infty$ for all $k$ and
  let $K$ be a stopping time for this process with $\bbE(K) < \infty$. Assume that for all
  $k$
  \begin{equation*}
    \bbE(X^k|X^{k-1},\dots,X^{0}) \le X^{k - 1}.
  \end{equation*}
  Then $X^K$ is bounded almost surely and $\bbE(X^K) \le \bbE(X^0)$.
\end{theorem}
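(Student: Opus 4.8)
The plan is to deduce the result from the corresponding statement for the truncated process $\{X^{K \wedge n}\}_{n \in \bbN}$, where the supermartingale inequality can be exploited one increment at a time, and then to let $n \to \infty$. Since $\bbE(K) < \infty$, we have $K < \infty$ almost surely, so $X^K$ is a well-defined random variable and $X^{K \wedge n} \to X^K$ almost surely as $n \to \infty$. The first goal is therefore the uniform bound $\bbE(X^{K \wedge n}) \le \bbE(X^0)$ for every fixed $n$.

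To obtain it, I would start from the telescoping identity
\begin{equation*}
  X^{K \wedge n} = X^0 + \sum_{j = 1}^{n} (X^j - X^{j-1})\, I\{K > j - 1\},
\end{equation*}
valid because the $j$-th increment is retained exactly on the event $\{K \ge j\} = \{K > j-1\}$ (using that $K$ is integer-valued). Each increment $X^j - X^{j-1}$ is integrable since $X^j, X^{j-1} \ge 0$ have finite expectation, so taking expectations reduces the claim to showing $\bbE\big[(X^j - X^{j-1})\, I\{K > j-1\}\big] \le 0$ for each $j$. Here I would invoke the stopping-time property: $I\{K > j-1\}$ is independent of $X^{j-1}$ and $X^j$, hence of the increment $X^j - X^{j-1}$, so that
\begin{equation*}
  \bbE\big[(X^j - X^{j-1})\, I\{K > j-1\}\big] = \bbE(X^j - X^{j-1})\,\bbP(K > j - 1) \le 0,
\end{equation*}
where $\bbE(X^j - X^{j-1}) \le 0$ follows by taking expectations in the assumed inequality $\bbE(X^j \mid X^{j-1}, \dots, X^0) \le X^{j-1}$. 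Summing over $j = 1, \dots, n$ gives $\bbE(X^{K \wedge n}) \le \bbE(X^0)$.

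Finally, because $X^k \ge 0$ for all $k$ and $X^{K \wedge n} \to X^K$ almost surely, Fatou's lemma gives
\begin{equation*}
  \bbE(X^K) = \bbE\Big(\liminf_{n \to \infty} X^{K \wedge n}\Big) \le \liminf_{n \to \infty} \bbE(X^{K \wedge n}) \le \bbE(X^0) < \infty .
\end{equation*}
This is the asserted bound, and since $\bbE(X^K) < \infty$ the random variable $X^K$ is finite (``bounded'') almost surely, which completes the argument.

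I expect the one genuinely non-routine point to be the termwise estimate $\bbE\big[(X^j - X^{j-1})\, I\{K > j-1\}\big] \le 0$, and in particular the justification that $I\{K > j-1\}$ may be decoupled from the pair $(X^{j-1}, X^j)$. With the usual filtration-based definition of a stopping time this is the familiar fact that a stopped supermartingale remains a supermartingale; under the independence-based definition adopted here, one must read ``$I\{K > k\}$ is independent from any $X^n$ with $n \ge k$'' as joint independence from $\sigma(\{X^n : n \ge k\})$ and check that this reading suffices for the displayed factorization. Everything else — the integrability bookkeeping, the telescoping identity, and the Fatou passage — is routine.
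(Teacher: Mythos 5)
Your proof is correct, but note that the paper itself does not prove this statement at all: it is recalled as a classical result and the proof is delegated to the cited textbook of Grimmett and Stirzaker. So your argument is a self-contained substitute rather than a variant of an argument in the paper. What you do is the standard stopped-process route (truncate at $K \wedge n$, telescope the increments against $I\{K>j-1\}$, pass to the limit by Fatou), adapted to the paper's independence-based definition of a stopping time; this adaptation actually makes the theorem easier than the classical optional stopping theorem, since the supermartingale hypothesis enters only through $\bbE(X^j)\le\bbE(X^{j-1})$ and you never need $\bbE(K)<\infty$, only $K<\infty$ almost surely. One small repair to your key step: the phrase ``independent of $X^{j-1}$ and $X^j$, hence of the increment'' is not justified as stated, because pairwise independence does not transfer to the difference; but you do not need it. Since $\bbE\bigl(X^j I\{K>j-1\}\bigr)=\bbE(X^j)\,\bbP(K>j-1)$ and $\bbE\bigl(X^{j-1} I\{K>j-1\}\bigr)=\bbE(X^{j-1})\,\bbP(K>j-1)$ each follow from the pairwise independence in the paper's definition (both terms being finite), subtracting gives exactly the displayed factorization, so the concern you raise in your final paragraph about needing joint independence from $\sigma(\{X^n : n\ge k\})$ evaporates. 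With that reading, the telescoping bound $\bbE(X^{K\wedge n})\le\bbE(X^0)$, the Fatou passage, and the interpretation of ``bounded almost surely'' as $X^K<\infty$ a.s.\ are all sound.
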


We are also going to use the well known behavior of the expectation of
a particular kind of stochastic process evaluated at a stopping time.
\begin{proposition}[Expectation of Sums]\label{prop:expectation of sums}
  Let ${\{T^k\}}_{k \in \bbN} \ge 0$ be a sequence of independent random variables,
  with $\bbE(T^k) \ge \alpha$ for all $k$. Let $S^{n} = \sum_{k = 0}^{n - 1}T^k$ be a
  stochastic process and $K$ a stopping time for ${\{S^k\}}_{k \in \bbN}$ with
  $\bbE(K) < \infty$.
  Then
  \begin{equation*}
    \bbE(S^K) \ge \bbE(K)\alpha.
  \end{equation*}
\end{proposition}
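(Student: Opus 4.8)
The plan is to establish a one-sided Wald identity by writing $S^K$ as an infinite series of nonnegative terms and then using the independence encoded in the definition of a stopping time. Since $\bbE(K) < \infty$, the stopping time $K$ is finite almost surely, so on a set of full measure we may write
\begin{equation*}
  S^K = \sum_{k=0}^{K-1} T^k = \sum_{k=0}^{\infty} T^k\, I\{K > k\},
\end{equation*}
where the second equality holds because $I\{K>k\} = 1$ exactly when $k \in \{0,\dots,K-1\}$. As every summand $T^k\,I\{K>k\}$ is nonnegative, Tonelli's theorem permits interchanging expectation and summation with no further integrability assumptions.

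Next I would evaluate each term. By the stopping time property for $\{S^k\}_{k\in\bbN}$, the indicator $I\{K>k\}$ is independent of $S^n$ for every $n \ge k$, and hence of $\sigma(S^n : n \ge k)$, which contains $T^k = S^{k+1} - S^k$. Consequently
\begin{equation*}
  \bbE\bigl(T^k\, I\{K>k\}\bigr) = \bbE(T^k)\,\bbP(K > k) \ge \alpha\,\bbP(K>k),
\end{equation*}
where the inequality uses $\bbE(T^k) \ge \alpha$ and $\bbP(K>k) \ge 0$. Summing over $k$ and invoking the tail-sum formula $\bbE(K) = \sum_{k=0}^{\infty} \bbP(K>k)$ for the nonnegative integer-valued variable $K$, I obtain
\begin{equation*}
  \bbE(S^K) = \sum_{k=0}^{\infty} \bbE\bigl(T^k\, I\{K>k\}\bigr) \ge \alpha \sum_{k=0}^{\infty} \bbP(K>k) = \alpha\,\bbE(K),
\end{equation*}
which is the assertion.

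The one subtle point I expect is the independence step: the stated definition of a stopping time only asserts that $I\{K>k\}$ is independent of each individual $S^n$ with $n \ge k$, whereas I need independence from $T^k$, which is a function of the pair $(S^k, S^{k+1})$. I would address this either by reading the definition as independence from the whole future $\sigma$-algebra $\sigma(S^n : n \ge k)$ --- the natural reading in this setting, and the one already used for the hitting times of Proposition~\ref{prop:hitting times} --- or by phrasing the argument directly in terms of the increments $T^k$, which are mutually independent by hypothesis while $\{K > k\}$ depends only on $T^0,\dots,T^{k-1}$. The remaining steps, namely the termwise rearrangement and the tail-sum identity, rely solely on the nonnegativity of the $T^k$ and require no additional input.
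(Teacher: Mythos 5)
Your argument is correct, and it is worth noting that the paper itself states Proposition~\ref{prop:expectation of sums} without proof, as a known fact, so there is no in-text proof to compare against; your write-up is exactly the standard one-sided Wald argument one would supply. The decomposition $S^K=\sum_{k\ge 0}T^k I\{K>k\}$, the use of Tonelli to justify the termwise expectation for nonnegative summands, and the tail-sum identity $\bbE(K)=\sum_{k\ge 0}\bbP(K>k)$ are all sound and need no extra integrability hypotheses. The one genuine subtlety is the one you flagged: the paper's definition of a stopping time only asserts independence of $I\{K>k\}$ from each individual $S^n$ with $n\ge k$, and pairwise independence does not by itself give independence from the increment $T^k=S^{k+1}-S^k$, which is a function of the pair $(S^k,S^{k+1})$. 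Either of your repairs closes this gap: reading the definition as independence from the future $\sigma$-algebra $\sigma(S^n:n\ge k)$ is clearly the intended meaning (it is what the paper implicitly uses in the Chernoff-bound proof, where conditional expectations given $T^{N-2},\dots,T^0$ are manipulated), and alternatively, in every application in the paper $K$ is a hitting time of the process, so $\{K>k\}$ is determined by $T^0,\dots,T^{k-1}$ and the needed independence follows from the mutual independence of the $T^k$. With that reading made explicit, the proof is complete.
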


The next result is due to Young~\cite{You21Cher}. We adapted the proof
of a much more general statement to fit better into our context.
\begin{proposition}[Chernoff Bound for Stopping Times]%
  \label{prop: chernoff bound for stopping times}
  Let ${\{T^k\}}_{k \in \bbN}$ be a sequence of independent Bernoulli random variables,
  with $\bbP(T^k = 1) \ge 1 - \delta \ge \frac{1}{2}$ for all $k$.
  Let $S^{n} = \sum_{k = 0}^{n - 1}T^k$ be a stochastic process and $K$ a stopping time
  for ${\{S^n\}}_{n \in \bbN}$ with $E < \infty$. Then for any $\gamma \in (0, 1)$ and for any $E \in \bbN$,
  \begin{equation*}%\label{eq:chernoff wow}
    \bbP((1 - \gamma)K \ge 2 S^K + \gamma(1 - \gamma^2)E) \le e^{-\gamma^2 E}.
  \end{equation*}
\end{proposition}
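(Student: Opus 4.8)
The plan is to manufacture a nonnegative exponential supermartingale out of the ``failure count'' process $W^n := n - S^n = \sum_{k=0}^{n-1}(1 - T^k)$, to stop it at $K$ via Doob's Optional Stopping Theorem, and finally to turn the resulting bound $\bbE(Y^K) \le 1$ into a tail bound by Markov's inequality. The key elementary input is that, since $\bbP(T^k = 0) \le \delta \le \tfrac12$, every $\mu > 0$ satisfies
\begin{equation*}
  \bbE(e^{\mu(1 - T^k)}) = 1 + \bbP(T^k = 0)(e^\mu - 1) \le \tfrac12(1 + e^\mu).
\end{equation*}
Fix a parameter $\mu > 0$ (to be tuned below) and set $c := \ln\tfrac{1 + e^\mu}{2} > 0$, so that $e^{-c}\bbE(e^{\mu(1-T^k)}) \le 1$. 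Put $Y^k := \exp(\mu W^k - ck)$. Then $Y^0 = 1$, $Y^k \ge 0$, and $0 \le Y^k \le e^{\mu k}$ gives $\bbE(Y^k) < \infty$; moreover $T^{k-1}$ is independent of $W^0, \dots, W^{k-1}$, hence of $Y^0, \dots, Y^{k-1}$, so
\begin{equation*}
  \bbE(Y^k \mid Y^{k-1}, \dots, Y^0) = Y^{k-1} e^{-c}\,\bbE(e^{\mu(1 - T^{k-1})}) \le Y^{k-1}.
\end{equation*}

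Since for each fixed $k$ the map $S^k \mapsto Y^k$ is strictly monotone, $\{Y^k\}$ and $\{S^k\}$ generate the same information, so $K$ is a stopping time for $\{Y^k\}$ as well; combined with $\bbE(K) < \infty$ and the supermartingale property just verified, Doob's Optional Stopping Theorem gives $\bbE(Y^K) \le \bbE(Y^0) = 1$. Markov's inequality applied to the nonnegative variable $Y^K$ then yields, for every $s \ge 1$,
\begin{equation*}
  \bbP(\mu W^K - cK \ge \ln s) = \bbP(Y^K \ge s) \le \tfrac1s = e^{-\ln s}.
\end{equation*}
Choosing $\ln s = \gamma^2 E$ and substituting $W^K = K - S^K$, the event becomes $(\mu - c)K - \mu S^K \ge \gamma^2 E$; dividing by $\mu/2 > 0$ it reads $\tfrac{2(\mu - c)}{\mu} K - 2 S^K \ge \tfrac{2\gamma^2}{\mu} E$.

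It remains to pick $\mu$ so that this event contains the one in the statement. I would take $\mu := \tfrac{2\gamma}{1 - \gamma^2}$, engineered so that $\tfrac{2\gamma^2}{\mu} = \gamma(1 - \gamma^2)$ and $\tfrac{\mu(1+\gamma)}{2} = \tfrac{\gamma}{1-\gamma}$. With this choice the displayed event is $\tfrac{2(\mu - c)}{\mu} K - 2 S^K \ge \gamma(1-\gamma^2)E$, and since $K \ge 0$ it suffices to verify $\tfrac{2(\mu - c)}{\mu} \ge 1 - \gamma$, i.e. $c \le \tfrac{\mu(1+\gamma)}{2} = \tfrac{\gamma}{1-\gamma}$, i.e.
\begin{equation*}
  1 + e^{\mu} \le 2\,e^{\gamma/(1-\gamma)}.
\end{equation*}
This scalar inequality is the one genuinely nontrivial point, and I expect it to be the main obstacle. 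Writing $r := \tfrac{\gamma}{1-\gamma} > 0$ one checks $\mu - r = \tfrac{r}{2r+1}$, so it is equivalent to $\tfrac{r}{2r+1} \le \ln(2 - e^{-r})$ for all $r > 0$. I would prove this by observing that the difference $\psi(r) := \ln(2 - e^{-r}) - \tfrac{r}{2r+1}$ satisfies $\psi(0) = \psi'(0) = 0$ and $\psi''(0) > 0$, that $\psi'(r) \ge 0$ is equivalent to $2r^2 + 2r + 1 \ge e^r$ (which holds on an initial interval and fails thereafter, changing sign exactly once), and that $\psi(r) \to \ln 2 - \tfrac12 > 0$ as $r \to \infty$; hence $\psi$ increases from $0$, then decreases toward a positive limit, so $\psi > 0$ on $(0, \infty)$. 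Granting this inequality, the event in the statement is contained in one of probability at most $e^{-\gamma^2 E}$, which completes the argument; all remaining work is routine bookkeeping with the supermartingale.
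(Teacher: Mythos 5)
Your proposal is correct, and it follows a genuinely different route from the paper. The paper (following Young) builds the specific product supermartingale $\Phi^N = (1+\gamma)^{\sum (1-T^k)}(1-\gamma)^{\sum T^k}e^{-\gamma^2 E}$, verifies the supermartingale property via the pointwise bound ${(1\pm\varepsilon)}^x \le 1\pm\varepsilon x$ together with $\bbE(1-2T^k)\le 0$, stops it with Doob and Markov, and then converts the exponent into the stated linear event using the inequality $\frac{x}{1+x}\le\log(1+x)$, which produces the coefficients $\frac{1}{1+\gamma}$ and $\frac{1}{1-\gamma}$ directly. You instead run the classical Cram\'er--Chernoff scheme on the failure count $W^n=n-S^n$: the one-parameter supermartingale $Y^k=\exp(\mu W^k-ck)$ with $c=\ln\frac{1+e^\mu}{2}$, optional stopping, Markov, and only then tune $\mu=\frac{2\gamma}{1-\gamma^2}$ and compare coefficients using $K\ge 0$. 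Your supermartingale verification is cleaner (a one-line moment-generating-function bound, no linearization trick), and the free parameter makes the mechanism transparent; the price is the closing scalar inequality $\frac{r}{2r+1}\le\ln(2-e^{-r})$ for $r=\frac{\gamma}{1-\gamma}$, which the paper's choice of weights avoids. I checked that inequality and your sketch of it: with $\psi(r)=\ln(2-e^{-r})-\frac{r}{2r+1}$ one has $\psi'(r)\ge 0 \iff 2r^2+2r+1\ge e^r$, and since $g(r)=2r^2+2r+1-e^r$ has $g(0)=0$, $g'(0)>0$ and $g''(r)=4-e^r$ changes sign once, $g$ is positive then negative with a single sign change; hence $\psi$ increases from $\psi(0)=0$ and then decreases to the positive limit $\ln 2-\tfrac12$, so $\psi>0$ on $(0,\infty)$. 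All other steps (independence of $T^{k-1}$ from the past, transfer of the stopping time to $\{Y^k\}$, integrability, and the coefficient comparison $c\le\frac{\mu(1+\gamma)}{2}=\frac{\gamma}{1-\gamma}$) are sound, so the argument goes through as written once the calculus lemma is spelled out.
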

\begin{proof}
  We define the stochastic process ${\{\Phi^k\}}_{k \in \bbN}$ by $\Phi^0 = e^{-\gamma^2 E}$ and
  \begin{equation*}%\label{eq:chernoff def phi n}
    \Phi^N = {(1 + \gamma)}^{\sum_{k = 0}^{N - 1} (1 - T^k)}
    {(1 - \gamma)}^{\sum_{k = 0}^{N - 1} (T^k)}e^{-\gamma^2 E}.
  \end{equation*}
  Let $N \in \bbN$. Using the inequality
  \begin{equation*}
    \forall x, y, \varepsilon \in [0, 1],\quad {(1 \pm \varepsilon)}^x \le 1 \pm \varepsilon x,
  \end{equation*}
  we can bound, recognizing the fact that $\forall k \in \bbN$, $T^{k} \in [0, 1]$,
  \begin{align}\label{eq:chernoff:bound on fraction}
    \frac{\Phi^N}{\Phi^{N - 1}}
    &= {(1 + \gamma)}^{1 - T^{N-1}}{(1 - \gamma)}^{T^{N - 1}}
      \le(1 + (1 - T^{N-1})\gamma)(1 - \gamma T^{N-1}) \nonumber \\
    &\le1 + ((1 - T^{N-1}) - T^{N-1})\gamma - \gamma^2T^{N-1}(1 - T^{N-1}) \nonumber \\
    &\le1 + ((1 - T^{N-1}) - T^{N-1})\gamma.
  \end{align}
  From the assumption, for any $k \in \bbN$,
  \begin{equation*}
    \bbE(T^{k}) \ge \frac{1}{2} \ge \bbE(1 - T^k),
  \end{equation*}
  and equivalently
  \begin{equation*}
    \bbE(1 - T^k - T^k) \le 0.
  \end{equation*}
  Because $T^{N-1}$ is independent on all $T^k$ with $k < N - 1$,
  \begin{equation*}
    \bbE(1 - T^{N-1} - T^{N-1}|T^{N-2},\dots,T^0) = \bbE(1 - T^{N-1} - T^{N-1}) \le 0.
  \end{equation*}
  We can take conditional expectations in~\eqref{eq:chernoff:bound on fraction}
  \begin{equation*}
    \bbE\left(\frac{\Phi^N}{\Phi^{N - 1}}|T^{N-2},\dots,T^0\right)
    \le 1 + \gamma\bbE((1 - T^{N-1}) - T^{N-1}|T^{N-2},\dots,T^0) \le 1.
  \end{equation*}
  Because $\Phi$ is defined in terms of $T$ and $\Phi^0$ is a constant, we can
  rewrite the previous equation as
  \begin{equation*}
    \bbE\left(\frac{\Phi^N}{\Phi^{N - 1}}|\Phi^{N-1},\dots,\Phi^1\right) \le 1.
  \end{equation*}
  Pulling out the known factor and using the properties of conditional expectations,
  shows
  \begin{equation*}%\label{eq:chernoff: supermartingale}
    \frac{1}{\Phi^{N-1}}\bbE\left(\Phi^N|\Phi^{N-1},\dots,\Phi^1\right) \le 1.
  \end{equation*}

  Because $K$ is a stopping time with finite expectation for ${\{S^k\}}_{k \in \bbN}$
  and because we can define ${\{T^k\}}_{k \in \bbN}$ in terms of ${\{S^k\}}_{k \in \bbN}$, and
  then we can define ${\{\Phi^k\}}_{k \in \bbN}$ in terms of ${\{T^k\}}_{k \in \bbN}$, so $K$ is
  a stopping time for ${\{\Phi^k\}}_{k \in \bbN}$ with finite expectation.
  Because ${\{\Phi^k\}}_{k \in \bbN}$ is positive,
  we can use Doob's optional stopping theorem to conclude that
  \begin{equation*}
    \bbE(\Phi^K) \le \bbE(\Phi^0).
  \end{equation*}
  The last part of the proof requires us to use Markov's inequality for $\Phi^K$,
  yielding
  \begin{equation}\label{eq:chernoff markov bound}
    \bbP(\Phi^K \ge 1) \le \bbE(\phi^t) \le \bbE(\Phi^0) = e^{-\gamma^2 E}.
  \end{equation}
  It is a classic result that if $\gamma$ is sufficiently small, for any
  $x \in [-\gamma, \gamma]$, $\frac{x}{1 + x} \le \log(1 + x)$, so we can conclude
  \begin{align*}%\label{eq:chernoff 2}
    \frac{1}{1 + \gamma}
    \sum_{k = 0}^{K - 1}(1 - T^k) - \frac{1}{1 - \gamma}\sum_{k = 0}^{K - 1} T^k
    &= \frac{1}{\gamma}\left(\frac{\gamma}{1 + \gamma} \sum_{k = 0}^{K - 1}(1 - T^k)
    + \frac{-\gamma}{1 - \gamma}\sum_{k = 0}^{K - 1} T^k\right) \nonumber \\
    &\le \frac{\log(1 + \gamma)}{\gamma} \sum_{k = 0}^{K - 1} (1 - T^k) +
      \frac{\log(1 - \gamma)}{\gamma}\sum_{k = 0}^{K - 1} T^k.
  \end{align*}
  This bound transforms into a relation between probabilities, i.e.
  \begin{align*}
    &\bbP\left(\frac{1}{1 + \gamma} \sum_{k = 0}^{K - 1} (1 - T^k)
    - \frac{1}{1 - \gamma}\sum_{k = 0}^{K - 1} T^k \ge \gamma E \right) \\
    &\quad\le \bbP\left(\log(1 + \gamma) \sum_{k = 0}^{K - 1} (1 - T^k) +
      \log(1 - \gamma)\sum_{k = 0}^{K - 1} T^k \ge \gamma^2 E\right).
  \end{align*}
  The exponential is monotone, so
  \begin{align}\label{eq:chernoff i am tired 1}
    \bbP\left(\frac{1}{1 + \gamma} \sum_{k = 0}^{K - 1} T^k
    - \frac{1}{1 - \gamma}\sum_{k = 0}^{K - 1} (1 - T^k) \ge \gamma E \right)
    \le \bbP\left({(1 + \gamma)}^{\sum_{k = 0}^{K - 1} T^k}
      {(1 - \gamma)}^{\sum_{k = 0}^{K - 1} (1 - T^k)} \ge e^{\gamma^2 E}\right),
  \end{align}
  and using the definition of $\Phi^T$,
  \begin{equation*}
    \bbP\left({(1 + \gamma)}^{\sum_{k = 0}^{K - 1} (1 - T^k)}
      {(1 - \gamma)}^{\sum_{k = 0}^{K - 1} T^k} \ge e^{\gamma^2 E}\right)
    = \bbP(\Phi^T \ge 1).
  \end{equation*}
  Combining~\eqref{eq:chernoff i am tired 1} with~\eqref{eq:chernoff markov bound}
  gives
  \begin{equation}\label{eq:chernoff whhhhhhy?}
    \bbP\left(\frac{1}{1 + \gamma} \sum_{k = 0}^{K - 1} (1 - T^k)
    - \frac{1}{1 - \gamma}\sum_{k = 0}^{K - 1} T^k \ge \gamma E \right)  \le e^{-\gamma^2 E}.
  \end{equation}

  In order to complete the proof, we investigate the expression
  from the left hand side of~\eqref{eq:chernoff whhhhhhy?}.
  By our definition $S^K = \sum_{k = 0}^{K - 1}T^K$,
  so $K - S^K = \sum_{k = 0}^{K - 1}(1 - T^K)$. This allows us to
  rewrite~\eqref{eq:chernoff whhhhhhy?}, yielding
  \begin{align*}
    \bbP\left(\frac{1}{1 + \gamma}(K - S^K) - \frac{1}{1 - \gamma}S^k \ge \gamma E \right)
    &=\bbP((1 - \gamma)(K - S^K) - (1 + \gamma)S^K \ge \gamma(1 - \gamma^2)E) \nonumber \\
    &=\bbP((1 - \gamma)K \ge 2 S^K + \gamma(1 - \gamma^2)E) \le e^{-\gamma^2 E}, \nonumber \\
  \end{align*}
  completing the proof.
\end{proof}
\begin{corollary}\label{corr: chernoff bound for stopping times}
  In the setting of Proposition~\ref{prop: chernoff bound for stopping times},
  set $E = \bbE(K)$. Then
  \begin{equation*}
    \bbP((1 - \gamma)K \ge 2 S^K + \gamma(1 - \gamma^2)\bbE(K)) \le e^{-\gamma^2 \bbE(K)}.
  \end{equation*}
\end{corollary}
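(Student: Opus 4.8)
The plan is to obtain this corollary as an immediate specialization of Proposition~\ref{prop: chernoff bound for stopping times}, taking the free parameter $E$ to be $\bbE(K)$. The standing hypotheses of Proposition~\ref{prop: chernoff bound for stopping times} already include $\bbE(K) < \infty$, and since $K \in \bbN$ forces $\bbE(K) \ge 0$, the quantity $\bbE(K)$ is a well-defined nonnegative real number; substituting $E = \bbE(K)$ into the conclusion
\begin{equation*}
  \bbP((1 - \gamma)K \ge 2 S^K + \gamma(1 - \gamma^2)E) \le e^{-\gamma^2 E}
\end{equation*}
then yields exactly the asserted inequality.

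The one point that genuinely needs attention is that Proposition~\ref{prop: chernoff bound for stopping times} is stated for $E \in \bbN$, whereas $\bbE(K)$ need not be an integer. I would resolve this by inspecting the proof of the proposition: there, the parameter $E$ enters only as the constant $\Phi^0 = e^{-\gamma^2 E}$ defining the auxiliary process and as the thresholds $\gamma E$ and $\gamma^2 E$ in the probability estimates. Every ingredient along the way — the elementary inequality ${(1 \pm \varepsilon)}^x \le 1 \pm \varepsilon x$, the bound $\frac{x}{1+x} \le \log(1+x)$, Doob's optional stopping theorem applied to ${\{\Phi^k\}}_{k \in \bbN}$, and Markov's inequality for $\Phi^K$ — is insensitive to whether $E$ is an integer. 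Hence the conclusion of the proposition is in fact valid for every real $E \ge 0$, and in particular for $E = \bbE(K)$.

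Thus the only (mild) obstacle is this integrality mismatch, and once one observes that the proof uses $E$ purely as a nonnegative real parameter, the corollary follows with no further computation. (If one prefers not to re-examine the proof, an alternative is to apply the proposition at $E' = \lfloor \bbE(K) \rfloor \in \bbN$ and note that, because $\gamma(1-\gamma^2) > 0$, the event at threshold $\bbE(K)$ is contained in the event at threshold $E'$; this recovers the same statement but with the slightly weaker constant $e^{-\gamma^2 \lfloor \bbE(K) \rfloor}$, whereas the real-parameter observation above gives the sharp bound $e^{-\gamma^2 \bbE(K)}$.)
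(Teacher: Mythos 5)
Your proposal is correct and matches the paper's (implicit) argument: the corollary is obtained simply by taking $E = \bbE(K)$ in Proposition~\ref{prop: chernoff bound for stopping times}. Your additional observation that $E$ enters the proposition's proof only as a nonnegative real parameter (so the restriction $E \in \bbN$ is immaterial) is a careful touch the paper passes over silently, and it correctly justifies the substitution when $\bbE(K)$ is not an integer.
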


This result is a generalization of the standard Chernoff bounds.
\begin{proposition}[Chernoff Bound]%
  \label{prop: markov perturbation inequality}
  Let ${\{T^k\}}_{k \in \bbN}$ be a sequence of independent Bernoulli random variables,
  with $\bbP(T^k = 1) \ge (1 - \delta)$ for all $k \in \bbN$. Then for any $\gamma \in (0, 1)$ and for
  any $n \in \bbN$
  \begin{equation*}
    \bbP\left(\sum_{k = 0}^{n-1} T^k \le (1 - \gamma) \bbE\left(\sum_{k = 0}^{n-1} T^k\right) \right)
      \le e^{-n(1 - \delta)\frac{\gamma^2}{2}}.
  \end{equation*}
\end{proposition}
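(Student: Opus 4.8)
The plan is to use the classical exponential-moment (Chernoff) method applied to the lower tail of $S^n := \sum_{k=0}^{n-1} T^k$. Write $p_k := \bbP(T^k = 1) \ge 1 - \delta$ and $\mu := \bbE(S^n) = \sum_{k=0}^{n-1} p_k$, so that $\mu \ge n(1-\delta)$. For any $t > 0$, Markov's inequality applied to the nonnegative random variable $e^{-tS^n}$ gives
\[
  \bbP\bigl(S^n \le (1-\gamma)\mu\bigr) = \bbP\bigl(e^{-tS^n} \ge e^{-t(1-\gamma)\mu}\bigr) \le e^{t(1-\gamma)\mu}\,\bbE\bigl(e^{-tS^n}\bigr).
\]
The first real step is then to factor the moment generating function using independence of the $T^k$: $\bbE(e^{-tS^n}) = \prod_{k=0}^{n-1}\bbE(e^{-tT^k}) = \prod_{k=0}^{n-1}\bigl(1 - p_k(1 - e^{-t})\bigr)$, and bound each factor by $1 - x \le e^{-x}$ to obtain $\bbE(e^{-tS^n}) \le \exp\bigl(-(1 - e^{-t})\mu\bigr)$. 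Combining, for every $t > 0$,
\[
  \bbP\bigl(S^n \le (1-\gamma)\mu\bigr) \le \exp\bigl(\mu\,[\,t(1-\gamma) - (1-e^{-t})\,]\bigr).
\]

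Next I would optimize the exponent. Minimizing $\varphi(t) := t(1-\gamma) - (1-e^{-t})$ over $t > 0$, the equation $\varphi'(t) = (1-\gamma) - e^{-t} = 0$ gives $t^\star = -\log(1-\gamma) > 0$, which is admissible since $\gamma \in (0,1)$, and since $\varphi''>0$ this is the minimizer. Evaluating, $\varphi(t^\star) = -(1-\gamma)\log(1-\gamma) - \gamma$, so that $\bbP(S^n \le (1-\gamma)\mu) \le \exp\bigl(\mu\,[-(1-\gamma)\log(1-\gamma) - \gamma]\bigr)$.

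The only remaining ingredient is the elementary inequality $(1-\gamma)\log(1-\gamma) + \gamma \ge \tfrac{\gamma^2}{2}$ on $[0,1)$: the function $g(\gamma) := (1-\gamma)\log(1-\gamma) + \gamma - \tfrac{\gamma^2}{2}$ vanishes at $0$ and has derivative $g'(\gamma) = -\log(1-\gamma) - \gamma \ge 0$ because $-\log(1-\gamma) \ge \gamma$, hence $g \ge 0$. Therefore $-(1-\gamma)\log(1-\gamma) - \gamma \le -\tfrac{\gamma^2}{2}$, which yields $\bbP(S^n \le (1-\gamma)\mu) \le e^{-\mu\gamma^2/2}$. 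Since $x \mapsto e^{-x\gamma^2/2}$ is decreasing and $\mu \ge n(1-\delta)$, the right-hand side is bounded by $e^{-n(1-\delta)\gamma^2/2}$, which is the claim. I expect the main (minor) obstacles to be purely bookkeeping: getting the sign of $t^\star$ right, verifying the quadratic lower bound on the rate function, and being careful that $\mu$ is replaced by its lower bound $n(1-\delta)$ only on the right-hand side of the inequality and not inside the event $\{S^n \le (1-\gamma)\mu\}$.

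As an alternative, one could attempt to recover this bound by specializing Proposition~\ref{prop: chernoff bound for stopping times} to the deterministic stopping time $K \equiv n$; however that route requires the additional hypothesis $\delta \le \tfrac12$ and produces a different constant in the exponent, so the direct exponential-moment argument above is preferable and self-contained.
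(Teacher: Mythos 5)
Your proof is correct: the exponential-moment argument, the optimization at $t^\star=-\log(1-\gamma)$, the bound $(1-\gamma)\log(1-\gamma)+\gamma\ge\gamma^2/2$, and the final replacement of $\mu=\bbE\bigl(\sum_{k=0}^{n-1}T^k\bigr)\ge n(1-\delta)$ only on the right-hand side are all handled properly. The paper gives no proof of this proposition at all---it is stated as the classical multiplicative Chernoff lower-tail bound---and what you have written is precisely the standard derivation of that classical result, so there is nothing to reconcile; your closing remark that specializing Proposition~\ref{prop: chernoff bound for stopping times} to $K\equiv n$ would need $\delta\le\tfrac12$ and give a weaker constant is also accurate.
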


\section{Regularity}
The notion of smoothness that emerged as the key ingredient in the analysis
of non-smooth Newton-type methods is that of Newton differentiability. This
notion has its roots in the work on semismooth optimization and it was
introduced by Qi in~\cite{Qi_96Cdif}, where it was called C-differentiability.
Further works, such as~\cite{BroUlb22Newt}, where it was used in the context of
infinite dimensional optimization, renamed this notion to Newton differentiability.

\begin{definition}[Weak Uniform Newton differentiability]%
\label{d:uniform Newton Diff}
  A function $F:U \subset \bbRn \to \bbRn$ is called
  {\em weakly uniformly Newton differentiable on $U$\/} if there exists a set
  valued mapping $\mathcal{H}F:V \setto \bbRnxn$ and $c > 0$ such that for all $x \in U$ and all
  $y \in U$,
  \begin{equation*}%\label{eq:uniform def-newton-diff-weak}
    \sup_{H \in \mathcal{H}F(x)}\frac{\|F(x) - F(y) - H(x - y)\|}{\|x - y\|} \le c.
  \end{equation*}
  The smallest such number $c$ is called the {\em constant of Newton
    differentiability}.
\end{definition}

In order to replace the (Clarke) subdifferential with a
single valued object that can equally characterize first order stationarity,
we require the following adaptation.
\begin{definition}[Single-Valued Adaptation]
  Let $F:U \setto \bbRn$. Consider the auxiliary function $F_1^m: U \setto \bbRn$, defined by
  \begin{equation*}
    F_1^m(x) = \proj_{\overline{F(x)}}0.
  \end{equation*}
  Any selection $F_1:U \to \bbRn$ of $F_1^m$ is called a {\em single-valued adaptation
  of $F$}.
  Clearly $0 \in F(x)$ if and only if $0 = F_1(x)$, so the problem of finding
  a zero of $F$ is the same as the problem of finding a zero of $F_1$.
\end{definition}

For the remaining of this work, we impose the following assumption on the
regularity of the objective.
\begin{assumption}[Regularity Assumption]\label{assum:reg}
  Let $f: U \subseteq \bbRn \to \bbR$ be Lipschitz continuous and assume $U$ is small enough and
  with an isolated local minimum and unique critical point
  at $\xbar$ and denote by $\partial f_1$ the single-valued adaptation of the Clarke
  subdifferential of $f$. Assume that $\partial f_1$ is uniformly weakly Newton
  differentiable on $U$. Denote the Newton differential of $\partial f_1$ by
  $\mathcal{H}f$ and assume that $\forall x \in U$ all $H \in \mathcal{H}f(x)$ are positive definite and
  further assume that the set $\bigcup_{x \in U}\{\|{{H}^{-1}}\|~|~H \in \mathcal{H}f(x)\}$ is bounded by
  $\Omega < \infty$ and the set $\bigcup_{x \in U}\{\|H\|~|~H \in \mathcal{H}f(x)\}$ is bounded by $\omega < \infty$.
  Let $c$ be as in Definition~\ref{d:uniform Newton Diff} and assume that
  $c\Omega < 1$.
\end{assumption}

This regularity assumption can be used to construct a Newton-like sequence with strong
convergence guarantees. In our work, we focus on the notions
of sufficient decrease and subdifferential lower bound. These notions
have been looked at by Bolte et al.\ in~\cite{BolSabTeb14Prox}.
\begin{lemma}[Sufficient Decrease]\label{lemma:sufficient decrease}
  If Assumption~\ref{assum:reg} is satisfied,
  then there exists $\rho > 0$ such that for all $x \in U$ and for all
  $x^{+} \in \{x - H^{-1}\partial f_1 (x)~|~H \in \mathcal{H}f(x), \det H \ne 0\} \cap U$,
  \begin{equation*}
    f(x) - f(x^{+}) \ge \rho\|x^{+} - x\|^2.
  \end{equation*}
\end{lemma}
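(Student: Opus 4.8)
The plan is to estimate $f(x) - f(x^+)$ from below by combining a descent-lemma-type argument with the Newton differentiability of $\partial f_1$. Fix $x \in U$, pick $H \in \mathcal{H}f(x)$ with $\det H \neq 0$, and set $x^+ = x - H^{-1}\partial f_1(x) \in U$, so that $x^+ - x = -H^{-1}\partial f_1(x)$ and hence $\partial f_1(x) = -H(x^+ - x)$. First I would write $f(x) - f(x^+)$ as an integral of the subdifferential along the segment $[x^+, x]$: since $f$ is Lipschitz, for a.e.\ $t$ the function $t \mapsto f(x^+ + t(x - x^+))$ is differentiable with derivative $\langle g_t, x - x^+\rangle$ for some $g_t \in \partial f(x^+ + t(x-x^+))$; because $\partial f_1$ is the projection of $0$ onto $\overline{\partial f}$, one has $\langle g_t, x - x^+ \rangle \ge \langle \partial f_1(\xi_t), x - x^+\rangle$ up to the usual subtleties (alternatively, bound the difference quotient directly using Definition~\ref{d:uniform Newton Diff} applied to $\partial f_1$). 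The cleaner route is: $f(x) - f(x^+) \ge \langle \partial f_1(x^+), x - x^+ \rangle$ if $\partial f_1$ were a gradient of a convex function — since it is not, I instead integrate and control the variation of $\partial f_1$ along the segment using the Newton-differentiability bound.

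Concretely, the second step is to use weak uniform Newton differentiability at the point $x^+$ (or at $x$) to write, for the selection $\partial f_1$,
\begin{equation*}
  \|\partial f_1(x^+) - \partial f_1(x) - H(x^+ - x)\| \le c\|x^+ - x\|,
\end{equation*}
where $H \in \mathcal{H}f(x)$ is the very matrix defining the step. Since $\partial f_1(x) = -H(x^+ - x)$, this collapses to $\|\partial f_1(x^+)\| \le c\|x^+ - x\|$. The third step is to relate $f(x) - f(x^+)$ to $\partial f_1(x)$ and $\partial f_1(x^+)$: integrating a subgradient $g_t \in \partial f(x_t)$ along $x_t = x^+ + t(x - x^+)$, and writing $g_t = \partial f_1(x_t) + r_t$ where $r_t$ is orthogonal-ish to the projection direction, gives
\begin{equation*}
  f(x) - f(x^+) = \int_0^1 \langle g_t, x - x^+\rangle \, dt \ge \int_0^1 \langle \partial f_1(x_t), x - x^+\rangle \, dt,
\end{equation*}
and then Newton differentiability bounds $\|\partial f_1(x_t) - \partial f_1(x) - tH(x^+ - x)\| \le c\,t\|x^+-x\|$, i.e. $\partial f_1(x_t) = (t-1)\,(-H(x^+-x)) + O(c\,t\|x^+-x\|)$, so that the integrand is at least $(1-t)\langle -H(x^+-x), -(x^+-x)\rangle/\dots$ up to an error of order $c\|x^+-x\|^2$. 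Collecting, $f(x) - f(x^+) \ge \tfrac{1}{2}\langle H(x^+-x), x^+-x\rangle - c\|x^+ - x\|^2 \ge \big(\tfrac{1}{2\Omega} - c\big)\|x^+-x\|^2$ using $\langle Hv,v\rangle \ge \|v\|^2/\|H^{-1}\| \ge \|v\|^2/\Omega$ (positive definiteness and the $\Omega$ bound from Assumption~\ref{assum:reg}).

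The final step is to verify that $\rho := \tfrac{1}{2\Omega} - c$ — or whatever precise constant the integration yields, possibly $\tfrac{1}{\Omega} - c$ or $\tfrac{1}{2}(\tfrac{1}{\Omega} - c)$ — is strictly positive; here I would invoke the standing hypothesis $c\Omega < 1$, which gives $c < 1/\Omega$ and hence $\rho > 0$ (if the constant that emerges is $\tfrac{1}{2\Omega} - c$ one needs the slightly stronger $c\Omega < \tfrac12$, in which case I would revisit the integration to extract the factor $1$ rather than $\tfrac12$, e.g. by using convexity of $t \mapsto f(x_t)$ is \emph{not} available, so more carefully: $\int_0^1(1-t)\,dt = \tfrac12$ is where the $\tfrac12$ comes from, and a sharper bound comes from noting $\partial f_1(x)$ itself appears, giving the full $\langle H(x^+-x),x^+-x\rangle$). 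The main obstacle I anticipate is precisely this bookkeeping: making rigorous the passage from "integrate a measurable selection of $\partial f$ along a segment" to "a bound in terms of the single-valued adaptation $\partial f_1$ and the Newton differential $H$", since $\partial f_1$ is only a selection of a projection and need not be a genuine gradient. The Newton-differentiability inequality of Definition~\ref{d:uniform Newton Diff} is the tool that rescues this, because it pins down how $\partial f_1$ varies in a neighbourhood regardless of selection issues; the quadratic lower bound then falls out from positive definiteness together with $c\Omega<1$.
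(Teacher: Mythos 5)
Your argument is, at bottom, the same computation as the paper's: expand $\partial_1 f$ along the segment from $x$ to $x^{+}$ via the weak uniform Newton differentiability bound $\|\partial_1 f(x)-\partial_1 f(x+t(x^{+}-x))+tH(x^{+}-x)\|\le c\,t\|x^{+}-x\|$, use $\partial_1 f(x)=-H(x^{+}-x)$, and finish with positive definiteness, the bound $\Omega$ on $\|H^{-1}\|$, and $c\Omega<1$. The paper merely packages this as a model argument: it introduces the quadratic model $Q$, shows the model error obeys $Q(x^{+})-f(x^{+})\le\frac{c}{2}\|x^{+}-x\|^2$ (which is exactly your integral estimate), computes the model decrease $f(x)-Q(x^{+})=\frac{1}{2}\langle\partial_1 f(x),H^{-1}\partial_1 f(x)\rangle=\frac{1}{2}\langle H(x^{+}-x),x^{+}-x\rangle$, and combines the two through a ratio argument together with $\|x^{+}-x\|^2\le\Omega\langle\partial_1 f(x),H^{-1}\partial_1 f(x)\rangle$. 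Your direct ``decrease minus error'' subtraction gives the same inequality without the ratio detour, so the routes differ only in presentation.

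Two points in your write-up need repair. First, the passage from a measurable Clarke subgradient selection $g_t$ to $\partial_1 f(x_t)$ via $\langle g_t,x-x^{+}\rangle\ge\langle\partial_1 f(x_t),x-x^{+}\rangle$ is not valid: $\partial_1 f(x_t)$ is the minimal-norm element of $\overline{\partial f(x_t)}$, and norm minimality gives no comparison of inner products against a fixed direction, so your ``orthogonal-ish $r_t$'' has no sign. The paper does not attempt this; it takes the line-integral identity $f(x)-f(x^{+})=\int_0^1\langle-\partial_1 f(x+t(x^{+}-x)),x^{+}-x\rangle\,dt$ with the single-valued adaptation itself as its starting point, so to match the paper you should start from that identity rather than route through an arbitrary subgradient (this identity is where the real selection-measurability burden sits, in the paper as much as in your sketch). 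Second, the constant worry is unfounded: the Newton-differentiability error at $x+t(x^{+}-x)$ has size $c\,t\|x^{+}-x\|$, and $\int_0^1 c\,t\,dt=\frac{c}{2}$, so the error term is $\frac{c}{2}\|x^{+}-x\|^2$, not $c\|x^{+}-x\|^2$. This yields $\rho=\frac{1}{2\Omega}-\frac{c}{2}=\frac{1-c\Omega}{2\Omega}>0$ directly from $c\Omega<1$; no strengthening to $c\Omega<\frac{1}{2}$ and no ``extracting the factor $1$'' is needed, and this constant is in fact the clean form of what the paper's final line intends.
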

\begin{proof}
  Let $x^{+} \in \{x - H^{-1}\partial f_1 (x)~|~H \in \mathcal{H}f(x), \det H \ne 0\} \cap U$ and set
  $H \in \mathcal{H}f(x)$ such that $H(x^{+} - x) = \partial f_1(x)$.
  In order to prove this lemma, we require two model decrease bounds. Pursuant
  to this, we introduce the model
  \begin{equation*}
    Q(x) = f(x) + \langle \partial_1 f(x), x - x \rangle + \frac{1}{2} \langle x - x, H (x - x)\rangle
  \end{equation*}

  The first bound we need is
  \begin{align}\label{eq:sufficient decrease monster}
    Q&(x^{+}) - f(x^{+})
       = f(x) - f(x^{+}) + \langle \partial_1 f(x), x^{+} - x \rangle
       + \frac{1}{2} \langle x^{+} - x, H (x^{+} - x) \rangle \nonumber \\
     &= \int_0^1 \langle -\partial_1 f(x + t(x^{+} - x)), x^{+} - x \rangle  \diff t+
       \langle \partial_1 f(x), x^{+} - x \rangle \nonumber \\
     &\quad + \frac{1}{2} \langle x^{+} - x, H (x^{+} - x) \rangle \nonumber \\
     &= \int_0^1 \langle \partial_1 f(x)-\partial_1 f(x + t(x^{+} - x)),
       x^{+} - x \rangle  \diff t \nonumber \\
     &\quad + \frac{1}{2} \langle x^{+} - x, H (x^{+} - x) \rangle \nonumber \\
     &= \int_0^1 \langle \partial_1 f(x)-\partial_1 f(x + t(x^{+} - x)),
       x^{+} - x \rangle  \diff t \nonumber \\
     &\quad + \int_0^1 t \diff t \langle x^{+} - x, H (x^{+} - x) \rangle \nonumber \\
     &= \int_0^1 \langle \partial_1 f(x)-\partial_1 f(x + t(x^{+} - x)), x^{+} - x \rangle \nonumber \\
     &\quad + \langle t(x^{+} - x), H (x^{+} - x) \rangle \diff t \nonumber \\
     &\le \int_0^1 \|\partial_1 f(x)-\partial_1 f(x + t(x^{+} - x)) + H t (x^{+} - x) \|
       \|x^{+} - x\| \diff t \nonumber \\
     &\le \int_0^1 tc\|(x^{+} - x) \|\|x^{+} - x\| \diff t \nonumber \\
     &=\frac{c}{2}\|x^{+} - x\|^2.
  \end{align}
  The second bound is derived from the observation that
  $x^{+} - x = - {H}^{-1}\partial_1 f(x)$, so
  \begin{align*}
    Q_k(x^{+})
    &- f(x) = \langle \partial_1 f(x), x^{+} - x \rangle
    + \frac{1}{2} \langle x^{+} - x, H (x^{+} - x) \rangle \nonumber \\
    &= -\langle \partial_1 f(x), {H}^{-1}\partial_1 f(x)  \rangle
    + \frac{1}{2} \langle {H}^{-1}\partial_1 f(x) , H {H}^{-1}\partial_1 f(x)  \rangle \nonumber \\
    &=-\frac{1}{2} \langle \partial_1 f(x), {H}^{-1}\partial_1 f(x) \rangle,
  \end{align*}
  where we have used the fact that $H$ is symmetric. Equivalently, and
  using the positive definiteness of $H$
  \begin{equation}\label{eq: sufficient decrease not monster}
    f(x) - Q_k(x^{+})
    = \frac{1}{2} \langle \partial_1 f(x), {H}^{-1}\partial_1 f(x) \rangle \ge 0.
  \end{equation}

  After a simple algebraic manipulation, we see
  \begin{equation*}
    1 - \frac{f(x) - f(x^{+})}{f(x) - Q(x)} =
    \frac{f(x^{+}) - Q(x)}{f(x) - Q(x)}.
  \end{equation*}

  Substituting~\eqref{eq:sufficient decrease monster}
  and~\eqref{eq: sufficient decrease not monster} in the previous bound yields
  \begin{align}\label{eq:sufficient decrease getting close to the end 2}
    1 - \frac{f(x) - f(x^{+})}{f(x) - Q(x)} \le
    c\frac{\|x^{+} - x\|^2}{\langle \partial_1 f(x), {H}^{-1}\partial_1 f(x) \rangle}.
  \end{align}

  The final hurdle consists in computing
  \begin{align}\label{eq:sufficient decrease why this}
    \|x^{+} - x\|^2 &= \langle {H}^{-1}\partial_1 f(x), {H}^{-1}\partial_1 f(x) \rangle
    \nonumber \\
    &\le \|{H}^{-1}\||\langle \partial_1 f(x), {H}^{-1}\partial_1 f(x) \rangle| \nonumber \\
    &= \|{H}^{-1}\|\langle \partial_1 f(x), {H}^{-1}\partial_1 f(x) \rangle \nonumber \\
    &\le \Omega\langle \partial_1 f(x), {H}^{-1}\partial_1 f(x) \rangle
  \end{align}
  and equivalently
  \begin{equation}\label{eq:sufficient decrease getting close to the end}
    \frac{\|x^{+} - x\|^2}{\langle \partial_1 f(x), {H}^{-1}\partial_1 f(x) \rangle} \le \Omega.
  \end{equation}
  Combining~\eqref{eq:sufficient decrease getting close to the end 2}
  with~\eqref{eq:sufficient decrease getting close to the end} and using
  the assumption $c\Omega < 1$, we see that
  \begin{equation*}
    1 - \frac{f(x) - f(x^{+})}{f(x) - Q(x)} \le 1 - (1 - c\Omega),
  \end{equation*}
  yielding
  \begin{equation*}
    f(x) - f(x^{+}) \ge (1 - c)\Omega(f(x) - Q(x)).
  \end{equation*}
  Finally, using~\eqref{eq: sufficient decrease not monster}
  and~\eqref{eq:sufficient decrease why this} produces the desired result
  \begin{equation*}
    f(x) - f(x^{+}) \ge \frac{1 - c}{2}\|x^{+} - x\|^2.
  \end{equation*}
\end{proof}

\begin{lemma}[Subdifferential Lower Bound]\label{lemma: gradient lower bound}
  If Assumption~\ref{assum:reg} is satisfied,
  then there exists $\tau \in (0, \infty)$ such that for all $x \in U$ and for all
  $x^{+} \in \{x - H^{-1}\partial f_1 (x)~|~H \in \mathcal{H}f(x), \det H \ne 0\} \cap U$,
  \begin{equation*}
    \|\partial_1 f(x)\| \le \tau\|x^{+} - x\|.
  \end{equation*}
\end{lemma}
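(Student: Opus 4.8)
The plan is to exploit the defining relation of the Newton-like update directly, exactly as was done to obtain~\eqref{eq: sufficient decrease not monster} in the previous lemma. Fix $x \in U$ and $x^{+} \in \{x - H^{-1}\partial f_1(x) \mid H \in \mathcal{H}f(x), \det H \ne 0\} \cap U$. By definition of this set there is some $H \in \mathcal{H}f(x)$ with $\det H \ne 0$ such that $x^{+} - x = -H^{-1}\partial_1 f(x)$. Left-multiplying by $H$ gives the identity $\partial_1 f(x) = -H(x^{+} - x)$, which is the whole content of the argument.

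From here I would simply take norms and apply the operator-norm bound from Assumption~\ref{assum:reg}: since $x \in U$ and $H \in \mathcal{H}f(x)$, we have $\|H\| \le \omega$, and therefore
\begin{equation*}
  \|\partial_1 f(x)\| = \|H(x^{+} - x)\| \le \|H\|\,\|x^{+} - x\| \le \omega\,\|x^{+} - x\|.
\end{equation*}
Setting $\tau = \omega$ finishes the proof. One small point worth recording is that $\tau \in (0,\infty)$ as required: $\omega < \infty$ is part of Assumption~\ref{assum:reg}, and $\omega > 0$ because the set $\{x - H^{-1}\partial f_1(x) \mid H \in \mathcal{H}f(x), \det H \ne 0\}$ being relevant (nonempty) forces $\mathcal{H}f(x)$ to contain a positive definite matrix $H$, which has $\|H\| > 0$.

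I do not anticipate any real obstacle here — unlike Lemma~\ref{lemma:sufficient decrease}, no model function, no integral remainder, and no use of the Newton-differentiability constant $c$ or the bound $c\Omega < 1$ is needed; the result is purely a consequence of the update formula together with the uniform upper bound $\omega$ on $\|H\|$. If one wanted a version with an explicit constant independent of the particular selection, $\tau = \omega$ already works uniformly over all admissible $x$ and $x^{+}$, so no further refinement is required.
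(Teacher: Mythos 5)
Your proof is correct and follows essentially the same route as the paper: both arguments reduce to the identity $\partial_1 f(x) = -H(x^{+}-x)$, take norms, and invoke the uniform bound $\|H\| \le \omega$ from Assumption~\ref{assum:reg} to obtain $\tau = \omega$. No further comment is needed.
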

\begin{proof}
  Let $x^{+} \in \{x - H^{-1}\partial f_1 (x)~|~H \in \mathcal{H}f(x), \det H \ne 0\} \cap U$ and set
  $H \in \mathcal{H}f(x)$ such that $H(x^{+} - x) = \partial f_1(x)$, so
  \begin{equation*}
    \|x^{+} - x\| = \|{H}^{-1} \partial_1f(x)\|
  \end{equation*}
  and because of the bound on $\|H\|$
  \begin{equation*}
    \|\partial_1f(x)\| = \|H{H}^{-1}\partial_1f(x)\| \le \|H^{k}\|\|{H^k}^{-1} \partial_1f(x)\| \le
    \omega \|{H}^{-1} \partial_1f(x)\|,
  \end{equation*}
  we can derive the conclusion
  \begin{equation*}
    \|x^{+} - x\| \ge \frac{1}{\omega} \|\partial_1f(x)\|.
  \end{equation*}
\end{proof}

\section{Stochastic Framework}
In this section we present an algorithmic framework for solving the problem
\begin{equation*}
  \min_{x \in U} f(x),
\end{equation*}
where $f$ satisfies Assumption~\ref{assum:reg}, but the Newton differential,
$\mathcal{H}f$ is only available through a stochastic oracle. In this context this, the
minimization problem is equivalent to the problem of finding the zero of $\partial f_1$,
i.e. $\xbar$.

Our proposed algorithm works by sampling the oracle, represented by a random variable
concentrated on the Newton
differential, $\mathcal{H}f$. The stochastic framework employs a backtracking approach in
order to check if an instance of the random variable is part of the Newton
differential. This insight is made concrete by
Algorithm~\ref{alg: Stochastic Newton}.
\begin{algorithm}
  \caption{\label{alg: Stochastic Newton} Stochastic Newton-type Method}
  \KwData{$f, \partial_1 f, x^0, c_0 \in \lbrack0, \infty\rparan , \alpha \in (0, 1), \varepsilon \in (0, 1)$\;}
  $k \gets 0$\;
  \While{$\|\partial_1 f(x^k)\| \ge \varepsilon$}{
    \KwSample{$B^k$\;}
    $y \gets x^k - B^k \partial_1 f(x^k)$\;
    \eIf{$f(x^k) - f(y) \ge c\|y - x^k\|$ {\bf and} $\|\partial_1 f(x^k)\| \le c^k\|y - x^k\|$}{
      $x^{k+1} \gets y$\;
      $c_{k+1} \gets c_k$\;
    }{
      $x^{k+1} \gets x^k$\;
      $c_{k+1} \gets \alpha c_k$\;
    }
    $k \gets k+1$\;
  }
  \Return{$x^{k}$};
\end{algorithm}
\begin{remark}
  In Algorithm~\ref{alg: Stochastic Newton}, if an iteration executes the
  {\bf then} branch of
  the conditional, we call this iteration a {\em successful iteration}. Otherwise
  we call it an {\em unsuccessful iteration}. Similarly, if ${B^k}^{-1} \in \mathcal{H}f(x^k)$
  we call the iteration {\em true}, and otherwise {\em false}.
\end{remark}

Based on the discussion from the previous section, the number of true,
but unsuccessful iterations is going to be bounded. This guarantees that all
successful iterations make some progress towards the critical point of $f$ and
all true iterations are going to be successful.

\begin{theorem}\label{thm:stochastic newton}
  Assume Assumption~\ref{assum:reg} is satisfied and let
  $\bbH:U \to \{A \in \bbRnxn~|~\det A \ne 0\}$ be a random function such that for any $x \in U$
  \begin{equation}\label{eq:stochastic newton probability bound}
     \bbP(\bbH(x) \in \mathcal{H}f(x)) \ge 1 - \delta.
  \end{equation}
  Consider for any $x^0 \in U$ the sequence
  generated by Algorithm~\ref{alg: Stochastic Newton} with $X^0 \sim \delta_{x^0}$ and
  ${B^k}^{-1} \sim \bbH(X^k)$ random variables. Further assume that for
  all $k$, $X^k \in U \aseq$.
 Let $K$ be the
 (possibly infinite) random variable representing the number of iterations
 before the algorithm terminates.
  \begin{enumerate}
  \item
    Then there exists $\rho > 0$ and $\tau > 0$
    such that
    \begin{equation*}
      \bbE(K) \le \frac{1}{1 - \delta}\left(
        \frac{f(x^0) - f(\xbar)}{\rho{(\tau\varepsilon)}^2}
        + \frac{\log \frac{\rho}{c^0}}{\log \alpha }
      \right)
    \end{equation*}
    Furthermore, for any $\gamma \in (0, 1)$
    \begin{equation*}
      \bbP\left(K \ge \frac{2 + \gamma(1 - \gamma^2)}{1 - \gamma} \frac{1}{1 - \delta} \left(
          \frac{f(x^0) - f(\xbar)}{\rho{(\tau\varepsilon)}^2}
          + \frac{\log \frac{\rho}{c_0}}{\log \alpha} \right)\right)
      \le e^{\frac{-\gamma^2}{1 - \delta}\left(
          \frac{f(x^0) - f(\xbar)}{\rho{(\tau\varepsilon)}^2}
          + \frac{\log \frac{\rho}{c_0}}{\log \alpha } \right)}.
    \end{equation*}
  \item
    Then there exist $\rho > 0$ and $\tau > 0$, such that for all $\gamma \in (0, 1)$,
    \begin{equation*}
      \bbP\left(\min_{n \le K}
        \|\partial_1 f(x^n)\| \ge \sqrt{\frac{f(x^0) - f(\xbar)}{\rho\tau^2
            \left((1 - \gamma)K - \frac{\log \frac{\rho}{c_0}}{\log \alpha } \right)}} \right)
      \le e^{-K(1 - \delta)\frac{\gamma^2}{2}}.
    \end{equation*}
  \end{enumerate}
\end{theorem}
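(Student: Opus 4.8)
The plan is to reduce the behavior of Algorithm~\ref{alg: Stochastic Newton} to the abstract stopping-time results of Section~1 by carefully classifying iterations. First I would set up the bookkeeping: for iteration $k$ let $T^k = I\{{B^k}^{-1} \in \mathcal{H}f(X^k)\}$ be the indicator of a \emph{true} iteration. By hypothesis~\eqref{eq:stochastic newton probability bound}, $\bbP(T^k = 1) \ge 1 - \delta$, and since $B^k$ is sampled independently of the past given $X^k$, the $T^k$ behave like the independent Bernoulli variables required by Proposition~\ref{prop: chernoff bound for stopping times} and Proposition~\ref{prop: markov perturbation inequality}. The key structural claim, flagged in the paragraph before the theorem, is that \emph{every true iteration is successful}: if ${B^k}^{-1} \in \mathcal{H}f(X^k)$ then $y = X^k - B^k \partial_1 f(X^k)$ is exactly a Newton-like update of the form appearing in Lemmas~\ref{lemma:sufficient decrease} and~\ref{lemma: gradient lower bound}, so $f(X^k) - f(y) \ge \rho\|y - X^k\|^2$ and $\|\partial_1 f(X^k)\| \le \tau\|y - X^k\|$; combined with the fact that $c_k \downarrow 0$ (it only ever gets multiplied by $\alpha < 1$) and $\rho$ is a fixed constant, once $c_k \le \rho$ the decrease test $f(X^k) - f(y) \ge c_k\|y - X^k\|$... — here one needs the test in the algorithm to match the quadratic bound, so I would read $c\|y-x^k\|$ in the pseudocode as the intended sufficient-decrease quantity and argue that after at most $\log(\rho/c_0)/\log\alpha$ unsuccessful iterations the parameter $c_k$ has dropped below $\rho$, after which no true iteration can fail. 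This yields the partition: $K = (\#\text{successful}) + (\#\text{unsuccessful})$, the number of unsuccessful iterations that are true is at most $N_{\mathrm{bt}} := \log(\rho/c_0)/\log\alpha$, and every successful iteration decreases $f$ by at least $\rho(\tau\varepsilon)^2$ (using Lemma~\ref{lemma: gradient lower bound} backwards: while the algorithm runs $\|\partial_1 f(X^k)\| \ge \varepsilon$, so $\|y - X^k\| \ge \varepsilon/\tau$... I would be careful about which of $\rho,\tau$ enters and in which power, matching the statement $f(x^0)-f(\xbar) \ge \#\text{succ} \cdot \rho(\varepsilon/\tau)^2$ wait — the statement writes $\rho(\tau\varepsilon)^2$, so the constants are defined so that this holds). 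Hence $\#\text{successful} \le (f(x^0) - f(\xbar))/(\rho(\tau\varepsilon)^2) =: N_{\mathrm{dec}}$.

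Next I would combine these counts. Define $S^n = \sum_{k=0}^{n-1} T^k$, the number of true iterations among the first $n$. On the event $\{K = n\}$, the number of \emph{false} iterations is $n - S^n$, and among the $S^n$ true iterations at most $N_{\mathrm{bt}}$ are unsuccessful and the rest are successful, so $S^n - N_{\mathrm{bt}} \le \#\text{successful} \le N_{\mathrm{dec}}$; also the false iterations contribute $n - S^n$ to the count with no a priori bound, which is precisely why a stochastic argument is needed. Thus $S^K \le N_{\mathrm{dec}} + N_{\mathrm{bt}} =: E$ deterministically. For the expectation bound in part~1, I would apply Proposition~\ref{prop:expectation of sums} with $\alpha = 1-\delta$: since $\bbE(T^k) \ge 1-\delta$ for all $k$ and $K$ is a stopping time for $\{S^k\}$, $\bbE(S^K) \ge (1-\delta)\bbE(K)$; but $S^K \le E$ always, so $\bbE(K) \le E/(1-\delta)$, which is exactly the claimed $\frac{1}{1-\delta}\bigl(\frac{f(x^0)-f(\xbar)}{\rho(\tau\varepsilon)^2} + \frac{\log(\rho/c_0)}{\log\alpha}\bigr)$. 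For this I also need $\bbE(K) < \infty$ a priori to invoke the proposition legitimately; I would get this either from a hitting-time argument (Proposition~\ref{prop:hitting times} applied to the true-iteration count reaching $E+1$, which can never happen, forcing termination) or by a separate geometric-tail estimate, and then feed the finiteness back in.

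For the probability bound in part~1, I would invoke Corollary~\ref{corr: chernoff bound for stopping times} with the role of ``$E$'' played by $\bbE(K)$, or more directly Proposition~\ref{prop: chernoff bound for stopping times} with $E$ chosen as the integer upper bound just derived. The inequality $(1-\gamma)K \ge 2S^K + \gamma(1-\gamma^2)E$ together with $S^K \le E$ shows $\{(1-\gamma)K \ge (2 + \gamma(1-\gamma^2))E\} \subseteq \{(1-\gamma)K \ge 2S^K + \gamma(1-\gamma^2)E\}$, whose probability is at most $e^{-\gamma^2 E}$; substituting $E = \frac{1}{1-\delta}(N_{\mathrm{dec}} + N_{\mathrm{bt}})$ and rearranging gives exactly the displayed bound (modulo the $\frac{1}{1-\delta}$ placement, which I would track carefully — it may be cleanest to absorb it by noting $\bbE(K) \le E/(1-\delta)$ and applying the corollary at $\bbE(K)$). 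For part~2, I would instead fix the horizon $n = K$ and use the standard Chernoff bound, Proposition~\ref{prop: markov perturbation inequality}: with probability at least $1 - e^{-K(1-\delta)\gamma^2/2}$ we have $S^K \ge (1-\gamma)\bbE(S^K) \ge (1-\gamma)K(1-\delta)$ — or, more likely matching the statement, $S^K \ge (1-\gamma)K$ after absorbing $(1-\delta)$, so the number of successful iterations is at least $(1-\gamma)K - N_{\mathrm{bt}}$; then each successful iteration's decrease $\rho\|y-X^k\|^2 \ge \rho(\|\partial_1 f(X^k)\|/\tau)^2 \ge \rho\bigl(\min_{n\le K}\|\partial_1 f(X^n)\|\bigr)^2/\tau^2$ summed against the total budget $f(x^0) - f(\xbar)$ gives $\bigl((1-\gamma)K - N_{\mathrm{bt}}\bigr)\rho\tau^{-2}\min_n\|\partial_1 f(X^n)\|^2 \le f(x^0)-f(\xbar)$, which rearranges to the claimed square-root bound on $\min_{n\le K}\|\partial_1 f(x^n)\|$.

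The main obstacle I anticipate is the careful coupling between the algorithm's adaptively shrinking parameter $c_k$ and the deterministic constants $\rho, \tau$ from the regularity lemmas — specifically, proving rigorously that a true iteration is always successful \emph{once $c_k$ is small enough}, and bounding the number of steps needed to reach that regime, all while the false iterations (which can do arbitrary damage, including none) are interleaved. One must ensure that unsuccessful \emph{false} iterations do not reset or otherwise interfere with the monotone decrease of $c_k$, and that $f(X^k)$ is nonincreasing along the whole run so that the decrease budget $f(x^0) - f(\xbar)$ is valid (this uses that on a false-but-successful iteration the decrease test still guarantees $f$ does not increase, and on any unsuccessful iteration $X^{k+1} = X^k$). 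The secondary subtlety is the a.s.-finiteness and integrability of $K$ needed to legitimately apply the optional-stopping-based propositions; I would handle this by first establishing $\bbE(K) < \infty$ via the observation that $S^K \le E$ is a hard cap on true iterations, so the process must exit before the true-iteration count exceeds $E$, combined with $\bbP(T^k = 1) \ge 1/2$ to bound the interspersed false iterations geometrically.
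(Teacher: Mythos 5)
Your proposal follows essentially the same route as the paper's proof: the same classification of iterations into true/false and successful/unsuccessful, the telescoping sufficient-decrease argument bounding the true successful iterations by $\frac{f(x^0) - f(\xbar)}{\rho(\tau\varepsilon)^2}$, the backtracking count $\frac{\log(\rho/c_0)}{\log\alpha}$ for true-but-unsuccessful iterations, Proposition~\ref{prop:expectation of sums} for $\bbE(K)$, Corollary~\ref{corr: chernoff bound for stopping times} for the tail bound, and Proposition~\ref{prop: markov perturbation inequality} plus the decrease budget for part~2. The only cosmetic difference is that the decrease budget should be charged only to \emph{true} successful iterations (false successful ones are handled with the trivial bound $f(x^k) - f(x^{k+1}) \ge 0$), which is exactly how the paper phrases it and which your counting $S^K \le N_{\mathrm{dec}} + N_{\mathrm{bt}}$ implicitly uses anyway.
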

\begin{proof}\phantom{Space}
  \begin{enumerate}
  \item
    From Lemma~\ref{lemma:sufficient decrease} there exists $\rho > 0$ such that
    for any $x \in U$ and for all $H \in \mathcal{H}f(x)$
    \begin{equation}\label{eq:stochastic newton suff d}
      f(x) - f(x - H^{-1} \partial_1 f(x)) \ge \rho\|H^{-1} \partial_1 f(x)\|,
    \end{equation}
    and from Lemma~\ref{lemma: gradient lower bound} we know that there exists
    $\tau > 0$ such that for any $x \in U$ and for all $H \in \mathcal{H}f(x)$
    \begin{equation}\label{eq:stochastic newton glb}
      \|H^{-1} \partial_1 f(x)\| \ge \tau\|\partial_1 f(x)\|.
    \end{equation}

    We consider the sequence ${\{x^k\}}_{k \in \bbN}$ produced by the successful iterates of
    the algorithm.
    We call an iteration $k$ true if $B^k \in \{H^{-1}~|~H \in \mathcal{H}f(x^k)\}$ and define
    the Bernoulli random variable $T^k$ to be 1 if the $k$th iteration is
    true and 0 otherwise.

    From~\eqref{eq:stochastic newton probability bound} we know that for any
    iteration $k$
    \begin{align*}
      \bbP(T^k = 1|X^k) &= \bbP({B^k}^{-1} \in \mathcal{H}f(X^k)|X^k) \nonumber \\
                     &= \bbP(\bbH(X^k) \in \mathcal{H}f(X^k)|X^k) \nonumber \\
                     &\ge 1 - \delta.
    \end{align*}
    Because $T^k$ are Bernoulli,
    \begin{equation*}
      \bbE(T^k|X^k)  = \bbP(T^k = 1|X^k) \ge 1 - \delta.
    \end{equation*}
    Using the law of total expectation,
    \begin{equation*}%\label{eq:stochastic newton expectaion of tk}
      \bbE(T^k) = \bbE(\bbE(T^k|X^k)) \ge \bbE(1 - \delta) = 1 - \delta.
    \end{equation*}

    We can conclude from~\eqref{eq:stochastic newton suff d}
    and~\eqref{eq:stochastic newton glb} that for any true and successful iteration
    $k \in \bbRn$ it holds that
    \begin{equation*}
      f(x^k) - f(x^{k+1}) \ge \rho\|x^{k+1} - x^k\|^2
    \end{equation*}
    and
    \begin{equation*}
      \|x^{k + 1} - x^{k}\| \ge \tau\|\partial_1 f(x^k)\|.
    \end{equation*}
    For any successful but false iteration, clearly
    \begin{equation}\label{eq:new proof for remark 1}
      f(x^k) - f(x^{k+1}) \ge 0
    \end{equation}
    and
    \begin{equation}\label{eq:new proof for remark 2}
      \|x^{k + 1} - x^{k}\| \ge 0.
    \end{equation}
    We can unify these two bounds in
    \begin{equation}\label{eq:new proof combined equation 1}
      f(x^k) - f(x^{k+1}) \ge T^k\rho\|x^{k+1} - x^k\|^2
    \end{equation}
    and
    \begin{equation}\label{eq:new proof combined equation}
      \|x^{k + 1} - x^{k}\| \ge T^k\tau\|\partial_1 f(x^k)\|.
    \end{equation}
    for any successful iteration.

    \begin{remark}
      The bounds in~\eqref{eq:new proof for remark 1}
      and~\eqref{eq:new proof for remark 2} can easily be improved to
      involve $\alpha^{\text{number of unsuccessful iterations}}c_0$ instead of 0, yielding a
      slightly tighter result.
    \end{remark}

    Denote by $T_S$ the random number of true successful iterations and denote
    by $S$ the total number of successful iterations. We can assume that
    the algorithm did not terminate before the $S$th iteration, so
    \begin{equation}\label{eq:new proof for paper bound on stochastic}
      \min_{n \le S} \|\partial_1 f(x^n)\| > \varepsilon.
    \end{equation}
    Clearly, for any iteration $k$ one has
    \begin{equation*}
      T^k\tau \|\partial_1 f(x^k)\| \ge T^k\tau \min_{n \le S} \|\partial_1 f(x^n)\|,
    \end{equation*}
    and using~\ref{eq:new proof combined equation}
    \begin{equation*}
      \|x^{k + 1} - x^{k}\| \ge T^k\tau \min_{n \le S} \|\partial_1 f(x^n)\|.
    \end{equation*}
    Squaring everything and multiplying by $T^k\rho$, and using the fact that
    $T^k$ is Bernoulli, and as such ${T^k}^2 = T^k$, we obtain
    \begin{equation*}
      T^k\rho{\|x^{k + 1} - x^{k}\|}^2 \ge T^k \rho \tau^2{\min_{n \le S} \|\partial_1 f(x^n)\|}^2.
    \end{equation*}
    Using~\ref{eq:new proof combined equation 1} shows
    \begin{equation*}
      f(x^k) - f(x^{k+1}) \ge T^k \rho \tau^2{\min_{n \le S} \|\partial_1 f(x^n)\|}^2,
    \end{equation*}
    followed by summing and telescoping, to arrive at
    \begin{equation}\label{eq:new proof this will appear again}
      f(x^0) - f(x^{S+1}) \ge \sum_{k = 0}^{S}T^k \rho \tau^2{\min_{n \le S} \|\partial_1 f(x^n)\|}^2.
    \end{equation}
    To complete this part of the proof, we use the fact that $\xbar$ is the minimum of $f$,
    so $f(\xbar) \le f(x^{S+1})$ and the fact that $T^k$ for $k \in \bbN$ are Bernoulli variables
    indicating if a successful iteration is true, so their sum counts the number
    of true and successful iterations, together
    with~\ref{eq:new proof for paper bound on stochastic}, yielding
    \begin{equation}\label{eq:stochastic bound t1s}
      T_S = \sum_{k = 0}^{S}T^k \le \frac{f(x^0) - f(\xbar)}{\rho{(\tau\varepsilon)}^2}.
    \end{equation}

    Next we focus on the number of unsuccessful iterations. For this
    purpose, now consider the entire, sequence ${\{x^k\}}_{k \in \bbN}$ produced by
    the algorithm, including all the repeated points yielded by unsuccessful
    iterations. As before, we denote with $T^k$ the random variable representing
    whether an iteration is true or not, and by $I$ and $T_I$ the total number of
    unsuccessful, and true and unsuccessful iterations respectively. We remark
    that $T_I \le I$.

    Clearly, when $c_k \le \min{\rho, \tau}$ we
    can use Lemma~\ref{lemma:sufficient decrease} and
    Lemma~\ref{lemma: gradient lower bound} to see that a true iteration is
    successful. Any unsuccessful iteration decreases $c_k$ by a factor of $\alpha$.
    This provides the bound
    \begin{equation*}
      \alpha^{I}c_0 \ge \rho
    \end{equation*}
    and rearranging
    \begin{equation}\label{eq:stochcastic bound tu}
      T_I \le I \le \frac{\log \frac{\rho}{c_0}}{\log \alpha}.
    \end{equation}

    We can easily see that $T_S + T_I = T$. By the definition of $T^k$, we know
    \begin{equation}\label{eq:stochastic newton def of T}
      T = \sum_{k = 0}^{K-1} T^k.
    \end{equation}
    From the two bounds~\eqref{eq:stochastic bound t1s}
    and~\eqref{eq:stochcastic bound tu} we can derive
    \begin{equation}\label{eq:stochastic bound t}
      T \le \frac{f(x^0) - f(\xbar)}{\rho{(\tau\varepsilon)}^2}
      + \frac{\log \frac{\rho}{c_0}}{\log \alpha}.
    \end{equation}
    This allows us to express $K$ as
    \begin{equation*}
      K = \min \left \{N \in \bbN~\left|~ \sum_{k = 0}^{K - 1} T^k
        \le \frac{f(x^0) - f(\xbar)}{\rho{(\tau\varepsilon)}^2}
        + \frac{\log \frac{\rho}{c_0}}{\log \alpha} \right.\right \}
    \end{equation*}
    and to conclude, that $K$ is a hitting time for the stochastic process
    and thus $K$ is a stopping time
    with finite expectation for the stochastic process. Next we
    use~\eqref{eq:stochastic newton def of T} to compute the expectation of
    $T$
    \begin{equation*}
      \bbE(T) = \bbE\left(\sum_{k = 0}^{K - 1} T^k\right),
    \end{equation*}
    which can be bounded by employing Proposition~\ref{prop:expectation of sums}
    \begin{equation*}
      \bbE(T) \le \bbE(K)(1 - \delta).
    \end{equation*}

    Rearranging in~\eqref{eq:stochastic bound t}
    and using Proposition~\ref{prop:expectation of sums}
    yields
    \begin{equation}\label{eq:stochastic bound on e(K)}
      \bbE(K) \le \frac{\bbE(T)}{1 - \delta} = \frac{\bbE(T_S + T_I)}{1 - \delta} \le
      \frac{1}{1 - \delta}\left(
        \frac{f(x^0) - f(\xbar)}{\rho{(\tau\varepsilon)}^2}
        + \frac{\log \frac{\rho}{c_0}}{\log \alpha}\right),
    \end{equation}
    finishing the first part of the proof.

    For the second part, we use
    Corollary~\ref{corr: chernoff bound for stopping times}
    to conclude that, for any $\gamma \in (0,1)$
    \begin{equation*}
      \bbP((1 - \gamma)K \ge 2T + \gamma(1 - \gamma^2)\bbE(K)) \le e^{-\gamma^2 \bbE(K)}.
    \end{equation*}
    Using~\eqref{eq:stochastic bound t} together
    with~\eqref{eq:stochastic bound on e(K)}
    \begin{equation*}
      \frac{2}{1 - \gamma}T + \gamma(1 + \gamma)\bbE(K) \le
      \frac{2 + \gamma(1 - \gamma^2)}{(1 - \gamma)(1 - \delta)}\left(
        \frac{f(x^0) - f(\xbar)}{{(\tau\varepsilon)}^2}
        + \frac{\log \frac{\rho}{c_0}}{\log \alpha} \right),
    \end{equation*}
    and this allows us to relate the probabilities
    \begin{align*}
      \bbP&\left(K \ge \frac{2 + \gamma(1 - \gamma^2)}{(1 - \gamma)(1 - \delta)} \left(
         \frac{f(x^0) - f(\xbar)}{\rho{(\tau\varepsilon)}^2}
         + \frac{\log \frac{\rho}{c_0}}{\log \alpha} \right)\right) \nonumber \\
       &\le \bbP((1 - \gamma)K \ge 2T + \gamma(1 - \gamma^2)\bbE(K)) \nonumber \\
       &\le e^{-\gamma^2 \bbE(K)}.
    \end{align*}
    From~\eqref{eq:stochastic bound on e(K)}, we compute
    \begin{equation*}
      e^{-\gamma^2 \bbE(K)} \le e^{-\gamma^2 \frac{1}{1 - \delta}\left(
          \frac{f(x^0) - f(\xbar)}{\rho{(\tau\varepsilon)}^2}
          + \frac{\log \frac{\rho}{c_0}}{\log \alpha}\right)},
    \end{equation*}
    finalizing the argument.

  \item
    We consider again the sequence produced by the successful iterations
    and we use the same notation as before.
    From Proposition~\ref{prop: markov perturbation inequality}, we bound
    \begin{equation}\label{eq: proof stochastic 2 first hing}
      \bbP(T \le (1 - \gamma)K) \le e^{-K(1 - \delta)\frac{\gamma^2}{2}}.
    \end{equation}
    Like in the proof of the previous part, we
    can bound the number of true but unsuccessfully iterations by
    \begin{equation*}
      T_I \le \frac{\log \frac{\rho}{c_0}}{\log \alpha },
    \end{equation*}
    so
    \begin{equation*}
      T_S \ge T - \frac{\log \frac{\rho}{c_0}}{\log \alpha }.
    \end{equation*}
    Using the probability from~\eqref{eq: proof stochastic 2 first hing}
    \begin{equation}\label{eq: proof stochastic 2 second things}
      \bbP\left(T_S \le (1 - \gamma)K - \frac{\log \frac{\rho}{\rho_0}}{\log \alpha } \right)
      \le  \bbP(T \le (1 - \gamma)K) \le e^{-K(1 - \delta)\frac{\gamma^2}{2}}.
    \end{equation}

    Recalling from~\ref{eq:new proof this will appear again}, together
    with the definition of $T_S$, one obtains that
    \begin{equation*}
      f(x^0) - f(\xbar) \ge T_S \rho \tau^2{\min_{n \le S} \|\partial_1 f(x^n)\|}^2.
    \end{equation*}
    Rearranging, gives
    \begin{equation}\label{eq:new proof bound on min nabla}
      \min_{n \le S} \|\partial_1 f(x^n)\| \le \sqrt{\frac{f(x^0) - f(\xbar)}{T_S \rho\tau^2}}.
    \end{equation}

    Using~\ref{eq:new proof bound on min nabla}, we can conclude that the event
    \begin{equation*}
      \sqrt{\frac{f(x^0) - f(\xbar)}{\rho\tau^2
          \left((1 - \gamma)K - \frac{\log \frac{\rho}{c_0}}{\log \alpha } \right)}}
      \le \min_{n \le S} \|\partial_1 f(x^n)\|
    \end{equation*}
    implies the event
    \begin{equation*}
      \sqrt{\frac{f(x^0) - f(\xbar)}{\rho\tau^2
          \left((1 - \gamma)K - \frac{\log \frac{\rho}{c_0}}{\log \alpha } \right)}}
      \le \sqrt{\frac{f(x^0) - f(\xbar)}{T_S \rho\tau^2}}.
    \end{equation*}
    Then
    \begin{equation*}
      T_S \le (1 - \gamma)K - \frac{\log \frac{\rho}{c_0}}{\log \alpha }.
    \end{equation*}
    This induces the probability relation
    \begin{align*}
      \bbP&\left(\min_{n \le T_S}
      \|\partial_1 f(x^n)\| \ge \sqrt{\frac{f(x^0) - f(\xbar)}{\rho\tau^2
        \left((1 - \gamma)K - \frac{\log \frac{\rho}{c_0}}{\log \alpha } \right)}} \right)\\
      &\quad\le \bbP\left(T_S \le (1 - \gamma)K - \frac{\log \frac{\rho}{c_0}}{\log \alpha }\right).
    \end{align*}
    Substitution in~\eqref{eq: proof stochastic 2 second things} yields
    \begin{equation*}
      \bbP\left(\min_{n \le T_S}
        \|\partial_1 f(x^n)\| \ge \sqrt{\frac{f(x^0) - f(\xbar)}{\rho_{\min}d^2
            \left((1 - \gamma)K - \frac{\log \frac{\rho_{\min}}{\rho^0}}{\log \alpha } \right)}} \right)
      \le e^{-K(1 - \delta)\frac{\gamma^2}{2}},
    \end{equation*}
    finishing the proof.
  \end{enumerate}
\end{proof}

The first part of Theorem~\ref{thm:stochastic newton} provides a probability
distribution on the number of iterates required in order to achieve approximate
first order optimality. The second part provides a dual result on the probability
distribution of the first order residual after a fixed amount of iterations.

\begin{remark}
  Different to a significant part of the literature, see for
  example~\cite{YanMilWenZha22Asto,Tur21Thec,ByrHanNocSin16Asto,GuiRob24Hand,
    BotCurNoc18Opti,BolByrNoc18Exac},
  we {\bf do not require}
  that the estimator is unbiased, i.e. $\bbE(\bbH(x)) \in \mathcal{H}f(x)$, or that it has finite
  variance, i.e $\bbE(\tr {\bbH(x)}^2) < \infty$. Also, the sub-exponential tail condition
  encountered in~\cite{NaDerMah23Hess} is not present in our work.
\end{remark}

\section{Applications}
In this section we showcase the wide applicability of our framework. We first
present a theoretical result covering random additive noise. Then we
cover the numerical analysis of a quasi-Newton method involved in a physical
experiment from
X-FEL imaging. The relationship between Newton differentiability and
quasi-Newton methods has further been explored in~\cite{Pin24Newt}.
The final example provides a theoretical analysis of a sketching
approach to Newton-type methods, together with numerical results.
The advantage of our approach when compared to previous methods, such as the
ones developed in~\cite{YanMilWenZha22Asto,MilXiaWenUlb22Onth,
  MilXIaCenWenUlb19Asto}, lies in
the applicability of our strong theoretical results to a large class of
algorithms and problems.

\subsection{Random Noise}
Regarding the algebraic setup for this subsection, we consider the Frobenius norm
of a matrix and we denote the closed ball in this norm with radius $\rho$ and
center $M \in \bbRn$ by $\bbB_\rho[M]$. We recall that the Frobenius norm is given
by an inner product, i.e. $\|M\|^2 = \tr M^T M$ and that this inner product
coincides with the usual inner product if we identify $\bbRnxn$ with $\bbR^{n^2}$.
We denote this identification with $\nu:\bbRnxn \to \bbR^{n^2}$ and remark that
$\nu(M){\nu(M)}^T \in \bbR^{n^2 \times n^2}$ and $\tr \nu(M){\nu(M)}^T = \|M\|^2$.

The purpose of this subsection is to work thorough an
example of a Newton-type method where the Newton differential is evaluated with
noise, showing that if the noise is normally distributed, with sufficient
mass concentrated in a small enough neighborhood of $0$, we can characterize
the expected number of iterations  in order to
attain approximate first order optimality.
\begin{proposition}[Noisy Newton-type Methods]\label{thm:noise}
  Using the same assumptions and notations as in the setting of
  Theorem~\ref{thm:stochastic newton}, let
  $\Sigma \in \mathbb{R}^{n^2 \times n^2}$ be a covariance (positive semidefinite) matrix acting
  on $\bbRnxn$.
  Consider the noisy version of the Newton Differential,
  $\mathcal{G}f(x) = \mathcal{H}f(x) + N$, where
  $N \in \bbRnxn$ is a random variable. Let $K$ the random number of iterations until
  Algorithm~\ref{alg: Stochastic Newton} with ${B^k}^{-1}$ sampled from
  $\mathcal{G}f(x^k)$ terminates.
  If $N$ follows the multivariate normal distribution
  $\mathcal{N}(0, \Sigma)$ and $c + n^{3/2}\sqrt{\tr \Sigma} < \Omega^{-1}$, then
  \begin{equation*}
    \bbE(K) \le \frac{1}{1 - n^{-1}}\left(\frac{f(x^0) - f(\xbar)}{0.5(1-c){(\omega^{-1}\varepsilon)}^2}
      + \frac{\log \frac{0.5(1-c)}{\rho^0}}{\log \alpha } \right).
  \end{equation*}
\end{proposition}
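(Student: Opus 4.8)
The plan is to verify that the noisy oracle $\mathcal{G}f$ satisfies the hypotheses of Theorem~\ref{thm:stochastic newton} with explicit constants, and then read off the resulting bound on $\bbE(K)$. The key quantity to control is the probability that a sampled matrix lands inside the Newton differential, i.e. $\bbP(\bbH(x) \in \mathcal{H}f(x))$, where here $\bbH(x)^{-1}$ is drawn from $\mathcal{G}f(x) = \mathcal{H}f(x) + N$ with $N \sim \mathcal{N}(0, \Sigma)$. First I would reduce the event $\mathcal{H}f(x) + N \in \mathcal{H}f(x)$ to a statement about the size of $N$: if the noise is small enough in Frobenius norm, the perturbed matrix still lies in the (set-valued) Newton differential — more precisely, I would argue that a perturbation of a positive-definite matrix $H$ with bounded inverse by a matrix $N$ of small Frobenius norm yields another admissible Newton differential element, with the constant of Newton differentiability degrading additively from $c$ to $c + \|N\|$. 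This is where the condition $c + n^{3/2}\sqrt{\tr\Sigma} < \Omega^{-1}$ enters: it guarantees that the perturbed constant still satisfies the $c\Omega < 1$ requirement of Assumption~\ref{assum:reg} on the relevant event.

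Second, I would estimate $\bbP(\|N\| \le n^{3/2}\sqrt{\tr\Sigma})$ from below. Using the identification $\nu:\bbRnxn \to \bbR^{n^2}$, the random matrix $N$ corresponds to a Gaussian vector in $\bbR^{n^2}$ with covariance $\Sigma$, and $\|N\|^2 = \tr \nu(N)\nu(N)^T$. The expected squared Frobenius norm is $\bbE\|N\|^2 = \tr\Sigma$, so by Markov's inequality applied to $\|N\|^2$, the probability that $\|N\|^2$ exceeds $n^3 \tr\Sigma$ is at most $n^{-3}$; a cruder bound giving $1 - \delta \ge 1 - n^{-1}$ is what the statement uses, so I would choose the threshold and the bookkeeping so that the failure probability $\delta$ satisfies $\delta \le n^{-1}$, matching the $\frac{1}{1-n^{-1}}$ prefactor. (One should check the direction of the inequality carefully and perhaps use a sharper concentration fact for the chi-squared-type variable $\|N\|^2$ if the naive Markov bound is too weak; the factor $n^{3/2}$ in the hypothesis is generous enough that a concentration argument will close the gap.)

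Third, with $1 - \delta = 1 - n^{-1}$ established and the Newton-differentiability constant on the good event bounded by $c + n^{3/2}\sqrt{\tr\Sigma} < \Omega^{-1}$, I would invoke Lemma~\ref{lemma:sufficient decrease} and Lemma~\ref{lemma: gradient lower bound} to identify the constants $\rho$ and $\tau$ explicitly. Tracing through the proof of Lemma~\ref{lemma:sufficient decrease}, the sufficient-decrease constant is $\rho = \tfrac{1}{2}(1 - c)$ (with $c$ replaced by the perturbed constant, but the statement records it simply as $0.5(1-c)$), and from Lemma~\ref{lemma: gradient lower bound} the subdifferential lower-bound constant is $\tau = \omega^{-1}$. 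Substituting $1-\delta = 1-n^{-1}$, $\rho = 0.5(1-c)$, $\tau = \omega^{-1}$ into the first bound of Theorem~\ref{thm:stochastic newton}, namely
\begin{equation*}
  \bbE(K) \le \frac{1}{1-\delta}\left(\frac{f(x^0) - f(\xbar)}{\rho(\tau\varepsilon)^2} + \frac{\log\frac{\rho}{c_0}}{\log\alpha}\right),
\end{equation*}
yields exactly the claimed inequality (with $c_0$ written as $\rho^0$ in the statement).

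The main obstacle I anticipate is the first step: making rigorous the claim that adding a small-Frobenius-norm matrix to an element of $\mathcal{H}f(x)$ produces another element of $\mathcal{H}f(x)$ with a controlled constant. This requires being precise about how the set-valued map $\mathcal{H}f$ is defined and closed under such perturbations — essentially that $\mathcal{H}f(x)$ can be taken to be (or contains) a Frobenius ball $\bbB_c[\,\cdot\,]$-type neighborhood of a central element, so that the triangle inequality in the defining sup of Definition~\ref{d:uniform Newton Diff} absorbs $\|N\|$ additively. If instead $\mathcal{H}f(x)$ is a fixed abstract set, one has to argue that the union over $x$ of the relevant matrices, perturbed by $N$, still satisfies the uniform Newton bound with constant $c + \|N\|$, and that positive definiteness and the bound $\Omega$ on inverses are preserved under perturbations of norm $< \Omega^{-1} - c$ (a standard Neumann-series argument). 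Everything downstream of that is substitution into already-proved results.
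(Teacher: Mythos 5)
Your proposal is correct and follows essentially the same route as the paper: the perturbation step you flag as the main obstacle is exactly the paper's auxiliary lemma (adding a Frobenius ball $\bbB_{\beta}[0]$ to the Newton differential degrades the constant additively to $c+\beta$, proved by the triangle inequality as you sketch), followed by a Markov bound on $\|N\|^2$ giving failure probability at most $n^{-1}$, and substitution of $\rho = 0.5(1-c)$, $\tau = \omega^{-1}$, $1-\delta = 1-n^{-1}$ into the first part of Theorem~\ref{thm:stochastic newton}. The only divergence is cosmetic: you compute $\bbE\|N\|^2 = \tr\Sigma$ directly while the paper invokes a Wishart-type moment identity, but both yield the needed $\delta \le n^{-1}$.
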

\begin{remark}
  Because the space of non-invertible matrices has Lebesgue measure $0$, the
  probability that $B$ sampled from $\mathcal{G}f(x)$ is not invertible is $0$. This
  allows us to remove the non-invertible matrices from the output of this
  random variable, without changing its distribution.
\end{remark}

For the proof, we first need an auxiliary lemma.
\begin{lemma}\label{lemma:inexact newton}
  If $\mathcal{H}F(x)$ is a Newton differential for $F$, then so is
  $\mathcal{H}f(x) + \bbB_{\beta}[0]$ with constant at most $c + \beta$.
\end{lemma}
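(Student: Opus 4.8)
The plan is to unwind the definition of weak uniform Newton differentiability (Definition~\ref{d:uniform Newton Diff}) for the set-valued map $x \mapsto \mathcal{H}F(x) + \bbB_\beta[0]$ and reduce the required supremum bound to the one already known for $\mathcal{H}F$, picking up an extra $\beta$ from the ball. Concretely, fix $x, y \in U$ and take an arbitrary element $\tilde H \in \mathcal{H}F(x) + \bbB_\beta[0]$. By definition of the Minkowski sum, $\tilde H = H + E$ for some $H \in \mathcal{H}F(x)$ and some $E \in \bbRnxn$ with $\|E\| \le \beta$. I would then write
\begin{equation*}
  \frac{\|F(x) - F(y) - \tilde H(x - y)\|}{\|x - y\|}
  \le \frac{\|F(x) - F(y) - H(x - y)\|}{\|x - y\|}
  + \frac{\|E(x - y)\|}{\|x - y\|},
\end{equation*}
using the triangle inequality after inserting and removing $H(x-y)$.

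The first term on the right is bounded by $c$ for \emph{every} $H \in \mathcal{H}F(x)$, by the hypothesis that $\mathcal{H}F$ is a Newton differential for $F$ with constant $c$; in particular it is bounded by $c$ for the specific $H$ appearing in the decomposition $\tilde H = H + E$. The second term is bounded by $\|E\|\,\|x-y\|/\|x-y\| = \|E\| \le \beta$ by submultiplicativity of the operator norm (or the Frobenius norm, whichever convention is in force). Hence the whole quotient is at most $c + \beta$. Since $\tilde H$ was an arbitrary element of $\mathcal{H}F(x) + \bbB_\beta[0]$, taking the supremum over all such $\tilde H$ gives
\begin{equation*}
  \sup_{\tilde H \in \mathcal{H}F(x) + \bbB_\beta[0]}
  \frac{\|F(x) - F(y) - \tilde H(x - y)\|}{\|x - y\|} \le c + \beta,
\end{equation*}
which is exactly the statement that $\mathcal{H}F + \bbB_\beta[0]$ is a Newton differential for $F$ with constant at most $c + \beta$. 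One should also note in passing that the domain set $V$ on which the new set-valued map is defined is unchanged, so no issue arises there.

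There is no real obstacle here — the lemma is essentially a one-line triangle-inequality estimate — but the one point that deserves a sentence of care is the quantifier structure: the bound in Definition~\ref{d:uniform Newton Diff} is a supremum over $H \in \mathcal{H}F(x)$, so it applies uniformly to every selection, which is exactly what lets us bound the first term for the $H$ extracted from $\tilde H$ without any further argument. A secondary minor point is the norm used to define $\bbB_\beta[0]$: the estimate $\|E(x-y)\| \le \|E\|\,\|x-y\|$ needs $\|\cdot\|$ to be submultiplicative with respect to the vector norm in the definition, which holds for both the operator norm and (with the usual vector $2$-norm) the Frobenius norm, so the lemma is robust to that choice.
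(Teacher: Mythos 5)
Your proof is correct and follows essentially the same route as the paper: decompose an element of $\mathcal{H}F(x) + \bbB_{\beta}[0]$ as $H + E$ with $H \in \mathcal{H}F(x)$, apply the triangle inequality, and bound the perturbation term by $\|E\| \le \beta$ before taking the supremum. If anything, your handling of the quantifiers (fixing the specific $H$ from the decomposition) is slightly more careful than the paper's phrasing, but the argument is the same.
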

\begin{proof}
  This follows simply by using the
  triangle inequality and the definitions, showing that for any $x, y$ and
  $G \in \mathcal{H}f(x) + \bbB_{\beta}[0]$ and $H \in \mathcal{H}F(x)$
  \begin{align*}
    \frac{\|F(x) - F(y) - G(x - y)\|}{\|x - y\|}
      &\le \frac{\|F(x) - F(y) - (G - H + H)(x - y)\|}{\|x - y\|} \nonumber \\
      &\le \frac{\|F(x) - F(y) - H(x - y)\|}{\|x - y\|} \nonumber \\
      &+ \frac{\|(H - G)(x - y)\|}{\|x - y\|} \nonumber \\
      &\le \frac{\|F(x) - F(y) - H(x - y)\|}{\|x - y\|} + \|H - G\| \nonumber \\
      &\le \frac{\|F(x) - F(y) - H(x - y)\|}{\|x - y\|} + \beta.
  \end{align*}
  Because $x, y, H$ and $G$ are arbitrary, taking the supremum and using the
  definition of Newton differentiability proves the desired result.
\end{proof}

We can now prove the main result from this subsection.
\begin{proof}[Proof of Proposition~\ref{thm:noise}]
  From the previous lemma, we know that
  $\mathcal{H}f(x) + \bbB_{n^{3/2}\sqrt{\tr \Sigma}}[0]$ is a Newton differential for $\partial_1 f(x)$ with
  constant at most $c + n^2\sqrt{\tr \Sigma}$.

  Using the properties of the Wishart distribution
  \begin{equation*}
    \bbE(\nu(N){\nu(N)}^T) = n^2 \Sigma,
  \end{equation*}
  and applying the trace, which is a linear operator, we obtain
  \begin{equation*}
    \bbE(\|N\|^2) = n^2 \tr \Sigma.
  \end{equation*}
  Markov's inequality shows that
  \begin{equation*}
    \bbP(\|N\| \ge n^{3/2} \sqrt{\tr \Sigma}) =
    \bbP(\|N\|^2 \ge n^3 \tr \Sigma) \le \frac{n^2 \tr \Sigma}{n^3 \tr \Sigma} = \frac{1}{n}
  \end{equation*}
  holds, so
  \begin{equation*}
    \bbP(\mathcal{G}f(x) \in \mathcal{H}f(x) + \bbB_{n^{3/2}\sqrt{\tr \Sigma}}[0]) \ge 1 - \frac{1}{n}.
  \end{equation*}

  If the noise is small enough, such that $c + n^{3/2}\sqrt{\tr \Sigma} < \Omega^{-1}$,
  we can use Theorem~\ref{thm:stochastic newton}. Recall the formula for
  the constant of sufficient decrease from Lemma~\ref{lemma:sufficient decrease},
  and for the constant of a gradient lower bound from
  Lemma~\ref{lemma: gradient lower bound}. This allows for the explicit
  formula for the expectation of $K$, the random variable representing the
  total number of iterations of Algorithm~\ref{alg: Stochastic Newton}, for
  a given $\varepsilon$. Namely
  \begin{equation*}
    \bbE(K) \le \frac{1}{1 - n^{-1}}\left(\frac{f(x^0) - f(\xbar)}{0.5(1-c){(\omega^{-1}\varepsilon)}^2}
      + \frac{\log \frac{0.5(1-c)}{\rho^0}}{\log \alpha } \right).
  \end{equation*}
\end{proof}

%%%%%%%%%%%%%%%%%%%%%%%%%%%%%%%%%%%%%%%%%%%%%%%%%%%%%%%%%%%%%%%%%%%%%%%%%%%%%%%%
\subsection{X-ray, Free Electron Laser}
This subsection is based on
the mathematical formulation of the X-ray Free Electron Laser (XFEL) of
Luke, Schultze, and Grubm\"uller, in~\cite{LukSchGru24Stoc} and on
the author's quasi-Newton  the numerical algorithms implemented for this
problem.
The Julia code that produced the plots can be found in~\cite{Pin24NewtGit}.

Single-shot femtosecond  X-FEL imaging represents a useful test case for
our stochastic Newton-type methods framework. This is an imaging technique
that aims to recover the electron density of various biomolecules, by
interpreting a diffraction pattern. Mathematically, a molecule can be
represented by a probability distribution on $\bbR^3$. Diffraction, through a
random orientation of the molecule, represented by an element of $\SO(3)$,
induces a probability distribution on the detector plane, represented by $\bbR^2$.
The component-wise amplitude of a Fourier transform represents the likelihood
of detecting a photon at a particular place on the detector plane. A point
process generates around a hundred photon counts using these likelihoods. The
problem we aim to solve is a maximum likelihood estimation, where the
distribution is over photon counts in each discrete pixel of the detector plane,
and the parameter is the representation of the biomolecule. Randomness enters
the system, both through the stochastic nature of photon hits and through
the random orientation of the molecule.
\begin{remark}
  Due to Compton radiation, the pulse of photons that can hit a sample has
  to be very small. As such, we cannot create enough photon samples in order
  to build their probability distribution, via a histogram.
\end{remark}

As far as this work is concerned, the only properties of the problem that
are of importance are the smoothness of the objective and the fact that
oracle information is only estimated randomly. Nonetheless, for
completeness, we present a detailed mathematical formulation. Consider
$\mathcal{E}$ the finite dimensional parameter space and
let $u \in \mathcal{E} \subseteq \mathcal{L}^2(\bbR^3, \bbC)$, where $\SO(3)$ acts on $\mathcal{E}$, and
$\rho = \Unif \SO(3)$. Let $|\mathcal{F}|: \mathcal{E} \to \Pi(\bbR^2)$, where $\Pi(\bbR2)$ is the set of all
probability measures on the plane. In practice this map is related to
the amplitudes of the Fourier transform. This induces a map $\delta:\mathcal{E} \to \Pi(\bbR^2)$
defined by
\begin{equation*}
  \delta_u(A) = \int_{\SO(3)} |\mathcal{F}|(u \circ \rho)(A) \diff \rho.
\end{equation*}
This $\delta_u$ is a parametric probability distribution.
We assume that $\delta_u$ is Lebesgue absolutely continuous for all $u$, and denote
the Radon-Nykodim derivative by
\begin{equation*}
  f(u, \cdot) = \frac{\partial \delta_u}{\partial \lambda}.
\end{equation*}
Consider the maximum likelihood estimator for $k$ photon samples $\mathcal{U}_k:{\bbR^2}^k \to \mathcal{E}$,
defined by
\begin{equation*}
  \mathcal{U}_k(x_1, \dots, x_k) = \argmin_{u \in \mathcal{E}} \sum_{i = 1}^k-\log f(u, x_i).
\end{equation*}
Let $u \in \mathcal{E}$ the true electron distribution and let $X_1, \dots, X_N \sim \delta_u$.
The problem consists in understanding the distribution of
$\mathcal{U}_N(X_1, \dots, X_N)$ as $N$ goes to $\infty$.

In our numerical approach, we consider the objective function $v:\mathcal{E} \to \bbR$ defined
by
\begin{equation*}
  v(u) = \sum_{i = 1}^N-\log f(u, X_i).
\end{equation*}
Consider a fixed batch size $B < N$ and a random sequence of permutations of
$\{1,2,\dots,N\}$, ${\{\sigma^k\}}_{k \in \bbN}$. With this, we define the gradient estimator
\begin{equation*}
  g_k(u) = \nabla\sum_{i = 1}^B-\log f(u, X_{\sigma_k(i)}).
\end{equation*}
Let $\mathcal{S}:\mathcal{E} \times \mathcal{E} \times \bbRnxn \to \bbRnxn$ be a method of producing a quasi-Newton matrix
from an iterate, an estimation of the gradient, and a previous quasi-Newton matrix,
such as Broyden, DFP, or BFGS.
We run an inner loop of quasi-Newton steps with a given dataset
$X_{\sigma(1)},\dots,X_{\sigma(B)}$, followed by resampling the dataset and reinitializing
the quasi-Newton method, but maintaining the current iterate. The points of
interest to us are the points produced by the outer iteration.
Let $M$ be the number of inner
iterations and let $\lfloor k / M \rfloor$ be the integer part of $k/M$, i.e.~the iteration
counter of the outer loop. The sequence of stochastic Newton differentials is
then generated by
\begin{equation*}
  H^{k+1} = \left\{\begin{array}{lr}
    \alpha \Id&\quad \text{ if } M \text{ divides } k, \\
    \mathcal{S} (u^k, g_{\lfloor k / M \rfloor}(u^k), H^k)&\quad \text{ else},
  \end{array} \right.
\end{equation*}
for some algorithm parameter $\alpha$.

In Figures~\ref{fig:SQN a} and~\ref{fig:SQN b}, we see the behavior of this method,
with $M=10$ and $M=300$ respectively, compared
to a more traditional stochastic gradient descent, set up in the same inner and outer
loop way. For constructing the quasi-Newton matrices, we use
Samsara~\cite{WebSamsara,WebSamsarajl},
a reverse communication nonlinear optimization solver for smooth unconstrained
objectives developed by Luke et al. Under standard statistical assumptions,
it is known that
\begin{equation*}
  \lim_{N \to \infty}\frac{1}{N}\sum_{k=0}^{N-1}\mathcal{U}_M(X_{kM+1}, \dots, X_{kM+M}) = \delta_u \aseq,
\end{equation*}
where $u$ is the true representation that has produced the data $X_1,X_2,\dots$.
This is the reason why we plot the means, $k^{-1}\sum_{i=0}^k u^i$. We can
see in the figure, that the variance produced by the stochastic quasi-Newton
method, for $M=300$,
is lower, and this suggests that the inner loop functions as a
significantly better approximation of the maximum likelihood estimator, and thus
the behavior of the outer loop is dominated by statistics and not by
optimization. Paradoxically, the minimal variance is achieved by the
stochastic gradient descent with $M=10$. This happens because the steps
taken by 10 iterations of gradient descent are smaller, producing thus a smaller
variance. When looking at the objective value, the Newton-type method with
$M=300$ is significantly better than that obtained by the other methods.

\begin{figure}[ht!]
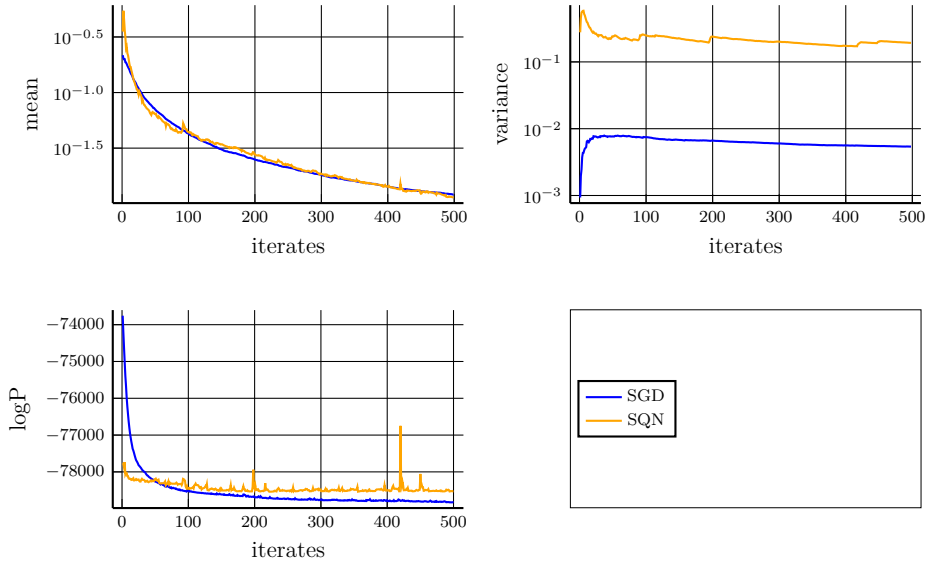

  \begin{adjustbox}{width=\textwidth}%
% [inline block 0: 2 envs, 279229 chars -> data_tex | \begin{tikzpicture}[/tikz/background rectangle/.style={fill={rgb,1:red,1.0;green,1.0;blue,1.0}, fill opacity={1.0}, draw...]

  \end{adjustbox}
  \caption{Mean and variance over $k$ of the steps $x_{k+1} - x_k$, and the
    objective value compared between Stochastic Gradient Descent and Stochastic
    Quasi-Newton with $M = 100$ inner iterations}%
  \label{fig:SQN b}
\end{figure}

%%%%%%%%%%%%%%%%%%%%%%%%%%%%%%%%%%%%%%%%%%%%%%%%%%%%%%%%%%%%%%%%%%%%%%%%%%%%%%%%
\subsection{Sketched Newton-type Methods}
The Johnson-Lindenstrauss Embedding Lemma has been a tool of choice
in solving large scale optimization systems, see for instance~\cite{CarFowZhe22Rand}.
For completeness, we recall this important result here.
\begin{lemma}[Johnson-Lindenstrauss]\label{lemma:jl}
  For any $n \in \bbN$ and for any $\varepsilon > 0$ and $\delta < 1/2$ and any
  $d \in \mathcal{O}(-\varepsilon^{-2}\log(\delta))$ there exists a distribution over $\bbR^{d \times n}$ such
  that for any vector $x \in \bbRn$ with $\|x\| = 1$
  \begin{equation*}
    \bbP(|\|Ax\|^2 - 1| \le \varepsilon) \le \delta.
  \end{equation*}
\end{lemma}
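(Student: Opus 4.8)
The plan is to exhibit the distribution explicitly as a rescaled Gaussian random matrix and reduce the claim to a chi-squared concentration inequality. Concretely, I would take $A \in \bbR^{d \times n}$ with independent entries $A_{ij} \sim \mathcal{N}(0, 1/d)$. Fixing a unit vector $x \in \bbRn$, each row of $A$ is an independent $\mathcal{N}(0, d^{-1}\Id)$ vector, so the coordinates $(Ax)_1, \dots, (Ax)_d$ are independent; and since $\|x\| = 1$, each satisfies $(Ax)_i \sim \mathcal{N}(0, 1/d)$. Consequently $Z := d\,\|Ax\|^2 = \sum_{i=1}^{d}\bigl(\sqrt{d}\,(Ax)_i\bigr)^2$ is a sum of $d$ independent squared standard normals, i.e.\ $Z \sim \chi^2_d$, and in particular $\bbE(\|Ax\|^2) = 1$. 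The lemma follows once we show $\bbP\bigl(|\,\|Ax\|^2 - 1\,| \ge \varepsilon\bigr) \le \delta$ for the claimed range of $d$ (note that the earlier Chernoff bounds in the excerpt are stated for Bernoulli variables and so do not apply directly here).

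The core step is a two-sided tail bound for $Z/d$. I would use the moment generating function $\bbE(e^{tZ}) = (1-2t)^{-d/2}$, valid for $t < 1/2$, together with Markov's inequality applied to $e^{tZ}$ (respectively $e^{-tZ}$), to obtain for $\varepsilon \in (0,1)$ the standard Laurent--Massart estimates
\begin{equation*}
  \bbP\bigl(\|Ax\|^2 \ge 1+\varepsilon\bigr) \le e^{-d(\varepsilon^2-\varepsilon^3)/4},
  \qquad
  \bbP\bigl(\|Ax\|^2 \le 1-\varepsilon\bigr) \le e^{-d\varepsilon^2/4}.
\end{equation*}
The first comes from optimizing $\bbP(Z \ge d(1+\varepsilon)) \le e^{-td(1+\varepsilon)}(1-2t)^{-d/2}$ over $t \in (0,1/2)$, the optimum being $t = \varepsilon/(2(1+\varepsilon))$, followed by the elementary inequality $\log(1+u) \le u - u^2/2 + u^3/3$; the second is the analogous computation on the lower tail using $\log(1-u) \le -u - u^2/2$. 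These calculations are routine and I would not belabor them. A union bound then yields $\bbP\bigl(|\,\|Ax\|^2-1\,| \ge \varepsilon\bigr) \le 2e^{-cd\varepsilon^2}$ for an absolute constant $c > 0$ (one may take $c = 1/8$ once $\varepsilon \le 1/2$; larger $\varepsilon$ can be absorbed trivially, as the lower-deviation event is empty when $\varepsilon \ge 1$ and the upper tail is then even smaller).

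Finally I would fix the embedding dimension: choosing $d = \bigl\lceil c^{-1}\varepsilon^{-2}\log(2/\delta)\bigr\rceil$ makes $2e^{-cd\varepsilon^2} \le \delta$, and since $\delta < 1/2$ we have $\log(2/\delta) = \Theta(-\log\delta)$, so this $d$ lies in $\mathcal{O}(-\varepsilon^{-2}\log\delta)$ as required. Hence the law of the rescaled Gaussian matrix $A$ is a distribution over $\bbR^{d \times n}$ with the stated property.

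The step I expect to be the main obstacle is carrying out the chi-squared Chernoff computation cleanly enough to land the asserted big-$\mathcal{O}$ rate with honest constants --- in particular handling the asymmetry between the upper and lower tails of $\chi^2_d$ (the upper tail requires the cubic correction term and a restriction to bounded $\varepsilon$, with large $\varepsilon$ dispatched by a triviality). The remaining ingredients --- identifying the Gaussian ensemble, the independence observation that makes $d\|Ax\|^2$ exactly $\chi^2_d$, and the final choice of $d$ --- amount to bookkeeping.
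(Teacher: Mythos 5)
The paper offers no proof of this lemma at all: it is recalled as a classical result, with only the remark that i.i.d.\ Gaussian matrices furnish such a distribution, so there is nothing internal to compare your argument against. Your proposal supplies the standard proof --- rows of $A$ with $\mathcal{N}(0,d^{-1}\Id)$ entries, the observation that $d\|Ax\|^2\sim\chi^2_d$ for unit $x$, the two-sided Chernoff/Laurent--Massart tail bounds via the moment generating function $(1-2t)^{-d/2}$, a union bound, and the choice $d=\lceil c^{-1}\varepsilon^{-2}\log(2/\delta)\rceil$ --- and this is essentially correct; it also correctly identifies the needed $1/\sqrt{d}$ rescaling, which the paper's remark about ``standard normal'' entries glosses over (without it $\bbE\|Ax\|^2=d$, not $1$). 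Two small points. First, note that what you prove is $\bbP(|\|Ax\|^2-1|\ge\varepsilon)\le\delta$, the standard Johnson--Lindenstrauss guarantee; the paper's displayed inequality literally reads $\bbP(|\|Ax\|^2-1|\le\varepsilon)\le\delta$, which would make the lemma vacuous and is evidently a typo, so you have proved the intended statement and should say so explicitly. Second, your handling of large $\varepsilon$ has a small hole: the claim that ``larger $\varepsilon$ can be absorbed trivially'' covers $\varepsilon\ge1$ but not $\varepsilon\in(1/2,1)$, where the upper-tail exponent $\varepsilon^2-\varepsilon^3$ degenerates as $\varepsilon\to1$; this is repaired in one line by monotonicity of the failure event in $\varepsilon$ (for $\varepsilon\ge1/2$ bound the probability by its value at $\varepsilon=1/2$, i.e.\ $2e^{-d/32}$, and absorb the constant into the $\mathcal{O}(-\varepsilon^{-2}\log\delta)$ choice of $d$), but as written it is a gap worth closing.
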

An example of such a distribution is given by matrices with components
independent and identically distributed from the standard normal distribution.
\begin{remark}
  The dimension $d$ depends on the desired error bound $\varepsilon$. In practice, in our
  algorithmic implementation, the dimension be given and the error analysis
  follow from this fixed dimension.
\end{remark}

This approach involves modifying an iterative solver by using a randomly
selected sketching matrix $\bbR^{d \times n}$ to reduce the dimension of the
objects involved. The computational advantages stem from the fact that $d \ll n$,
so all the numerical linear algebra subroutines should be significantly faster.

In our Newton-type methods, we employ sketching both for the function,
i.e.~$F(x) \mapsto S F(x)$ and for the Newton differential, i.e.~$H \mapsto S^T HS$.
Using this approach yields a step in $\bbR^d$, and in order to map this
step back to $\bbRn$ we use the $S^T$ map. Next, we put this intuitive
explanation into a formal definition.
\begin{definition}[Sketched Newton-type Operator]
  Let $F:U \subseteq \bbRn \to \bbRn$ be pointwise weakly Newton differentiable at $\xbar$ with
  Newton differential $\mathcal{H}F$, and let $d \ll n$.
  The fixed point iteration of the set-valued operator
  $\mathcal{N}_{\mathcal{H}F}:\bbRn \times \bbR^{d \times n}\setto \bbRn$, defined by
  \begin{equation*}
    \mathcal{N}_{\mathcal{H}F} (x, S) = \{x - S^T{(S HS^T)}^{-1} SF(x)~|~H \in \mathcal{H}F(x),\det S H S^T \ne 0\},
  \end{equation*}
  is called {\em a sketched Newton-type method}.
\end{definition}

\begin{lemma}
  Let $F:U \subseteq \bbRn \to \bbRn$ be pointwise weakly Newton differentiable at $\xbar$ with
  $F(\xbar) = 0$ and
  Newton differential $\mathcal{H}F$, and let $d \ll n$. Let $\mathcal{S}$ be a distribution over
  $\bbR^{d \times n}$ as in Lemma~\ref{lemma:jl} and $c > 0$ be such that
  \begin{equation*}
    \forall x \in U,\quad \sup_{H \in \mathcal{H}F(x)}\|F(x) - H(x - \xbar)\| \le c\|x - \xbar\|.
  \end{equation*}
  Let $y \in U$ and $H \in \mathcal{H}F(y)$ and let $S$ be sampled from $\mathcal{S}$, and assume further
  that there exist $\varepsilon > 0$ and $\delta > 0$ such that
  \begin{equation}\label{eq:PCA}
    \bbP\left(\begin{array}{lr}
      \|S^T SHS^T S - H\| \le \varepsilon &\text{ and} \\
      \|y - \xbar - S^T{(S^T HS)}^{-1} SF(y)\| \le (1 + \varepsilon)
      \|Sy - S\xbar - {(S^T HS)}^{-1} SF(y)\|
                            &\text{ and} \\
      \|SF(y) - (S H S^T) S(y - \xbar)\| \le (1 + \varepsilon)\|F(y) - S^T (S H S^T) S(y - \xbar)\|
                            &
    \end{array}\right) \ge 1 - \delta.
  \end{equation}
  Let $y^{+} \in \mathcal{N}_{\mathcal{H}F}(y, S)$ be defined by
  \begin{equation}\label{eq:sketching yp}
    y^{+}=y - S^T{(SHS^T)}^{-1} SF(y).
  \end{equation}
  Then
  \begin{equation}\label{eq:sketching result}
    \bbP(\|y^{+} - \xbar\| \le {(1 + \varepsilon)}^2(c + \varepsilon)\|{(SHS^T)}^{-1}\|\|y - \xbar\|) \ge 1 - \delta.
  \end{equation}
\end{lemma}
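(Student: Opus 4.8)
The plan is to reduce the probabilistic claim to a deterministic chain of inequalities. First I would condition on the event $\mathcal{E}$ described in~\eqref{eq:PCA}, which by hypothesis has probability at least $1-\delta$, and then show that on $\mathcal{E}$ the bound in~\eqref{eq:sketching result} holds pointwise; this immediately yields $\bbP(\cdot)\ge 1-\delta$. Note that, since $y^{+}$ is taken to be the vector in~\eqref{eq:sketching yp}, we are automatically on the branch $\det SHS^{T}\ne 0$ of the sketched Newton-type operator, so $(SHS^{T})^{-1}$ is well defined. Abbreviate $w = (SHS^{T})^{-1}SF(y)\in\bbR^{d}$, so that $y^{+} - \xbar = (y-\xbar) - S^{T}w$, and observe that by associativity $S^{T}(SHS^{T})S = S^{T}SHS^{T}S =: \tilde H$, which is the matrix appearing in the first clause of~\eqref{eq:PCA}.

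The chain has four links, carried out in order. First, the second clause of~\eqref{eq:PCA} transports the estimate from $\bbRn$ into the sketched space: $\|y^{+}-\xbar\| = \|(y-\xbar) - S^{T}w\| \le (1+\varepsilon)\,\|S(y-\xbar) - w\|$. Second, I would factor out the sketched Newton matrix, $S(y-\xbar) - w = (SHS^{T})^{-1}\big((SHS^{T})S(y-\xbar) - SF(y)\big)$, which gives $\|S(y-\xbar)-w\| \le \|(SHS^{T})^{-1}\|\,\|SF(y) - (SHS^{T})S(y-\xbar)\|$. Third, the third clause of~\eqref{eq:PCA} lifts this sketched residual back to $\bbRn$: $\|SF(y) - (SHS^{T})S(y-\xbar)\| \le (1+\varepsilon)\,\|F(y) - \tilde H(y-\xbar)\|$.

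The last link is purely in $\bbRn$: split
\[
F(y) - \tilde H(y-\xbar) = \big(F(y) - H(y-\xbar)\big) + \big(H - \tilde H\big)(y-\xbar),
\]
bound the first summand by $c\|y-\xbar\|$ using the Newton-differentiability hypothesis of the lemma together with $F(\xbar)=0$, and bound the second by $\varepsilon\|y-\xbar\|$ using the first clause of~\eqref{eq:PCA}, so that $\|F(y)-\tilde H(y-\xbar)\| \le (c+\varepsilon)\|y-\xbar\|$. Multiplying the four links together yields $\|y^{+}-\xbar\| \le (1+\varepsilon)^{2}(c+\varepsilon)\|(SHS^{T})^{-1}\|\,\|y-\xbar\|$ on $\mathcal{E}$, which is exactly~\eqref{eq:sketching result}. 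There is no conceptual obstacle; the only delicate point is bookkeeping — keeping the placement of $S$ and $S^{T}$ and of the two $(1+\varepsilon)$ factors exactly aligned with the three clauses of~\eqref{eq:PCA} (in particular that $SHS^{T}$ and $w$ live in the $d$-dimensional space), so that each clause is invoked in precisely the form in which it is stated.
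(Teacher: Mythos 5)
Your proposal is correct and follows essentially the same route as the paper: condition on the event in~\eqref{eq:PCA}, then chain the second clause, the factorization through $(SHS^T)^{-1}$, the third clause, and finally the bound $\|F(y)-S^TSHS^TS(y-\xbar)\|\le (c+\varepsilon)\|y-\xbar\|$. The only cosmetic difference is that you prove this last bound directly by splitting off $(H - S^TSHS^TS)(y-\xbar)$ with the triangle inequality, whereas the paper invokes its Lemma~\ref{lemma:inexact newton}, whose proof is exactly that computation.
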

\begin{proof}
  Let $E$ denote the event inside the probability in~\eqref{eq:PCA}. If
  $E$ implies the event inside the probability in~\eqref{eq:sketching result},
  then we can deduce
  \begin{equation}\label{eq:scketcing prob interpretation}
    \bbP(E) \le \bbP(\|y^{+} - \xbar\| \le {(1 + \varepsilon)}^2(c + \varepsilon)\|{(SHS^T)}^{-1}\|\|y - \xbar\|).
  \end{equation}
  Indeed, we now show that $E$ implies the event inside the
  probability in~\eqref{eq:sketching result}. To show this implication,
  we assume that $S$ is such that
  \begin{equation}\label{eq:sketching cool pca}
    \|S^T SHS^T S - H\| \le \varepsilon,
  \end{equation}
  and
  \begin{equation}\label{eq:sketching 1}
        \|SF(y) - (S H S^T) S(y - \xbar)\|
        \le (1 + \varepsilon)\|F(y) - S^T (S H S^T) S(y - \xbar)\|,
  \end{equation}
  and
  \begin{equation}\label{eq:sketching 2}
    \|y - \xbar - S^T{(SHS^T)}^{-1} SF(y)\| \le (1 + \varepsilon)
    \|Sy - S\xbar - {(SHS^T)}^{-1} SF(y)\|.
  \end{equation}
  Using~\eqref{eq:sketching 2}, we can express
  \begin{equation*}
    \|y - \xbar - S^T{(SHS^T)}^{-1} SF(y)\| \le (1 + \varepsilon)
    \|{(SHS^T)}^{-1}\|\|{(SHS^T)}S(y - \xbar) - SF(y)\|,
  \end{equation*}
  and recalling the definition of $y^{+}$ from~\eqref{eq:sketching yp},
  \begin{equation}\label{eq:sketching 3}
    \|y^{+} - \xbar\| \le (1 + \varepsilon)
    \|{(SHS^T)}^{-1}\|\|{(SHS^T)}S(y - \xbar) - SF(y)\|.
  \end{equation}
  Combining~\eqref{eq:sketching 3} with~\eqref{eq:sketching 1} gives
  \begin{equation}\label{eq:sketching also close}
    \|y^{+} - \xbar\| \le {(1 + \varepsilon)}^2\|{(SHS^T)}^{-1}\|
    \|F(y) - F(\xbar) - S^T (S H S^T) S(y - \xbar)\|.
  \end{equation}
  Using~\eqref{eq:sketching cool pca} shows that
  \begin{equation*}
    S^T (S H S^T) \in B_{\varepsilon}[H],
  \end{equation*}
  and Lemma~\ref{lemma:inexact newton} gives
  \begin{equation}\label{eq:sketching soooo close}
    \|F(y) - F(\xbar) - S^T (S H S^T) S(y - \xbar)\| \le (c + \varepsilon) \|y - \xbar\|.
  \end{equation}
  Finally, using~\eqref{eq:sketching soooo close}
  in~\eqref{eq:sketching also close} yields
  \begin{equation*}
    \|y^{+} - \xbar\| \le {(1 + \varepsilon)}^2(c + \varepsilon)\|{(SHS^T)}^{-1}\|\|y - \xbar\|.
  \end{equation*}
  Returning to the probability interpretation
  from~\eqref{eq:scketcing prob interpretation}, we complete the proof,
  computing
  \begin{equation*}
    1 - \delta \le \bbP(E) \le \bbP(\|y^{+} - \xbar\| \le {(1 + \varepsilon)}^2(c + \varepsilon)\|{(SHS^T)}^{-1}\|\|y - \xbar\|).
  \end{equation*}
\end{proof}

\begin{remark}
  In~\cite{CunGha14Line}, it was shown that the $d$-principal component analysis
  (PCA) of a matrix $H$ can be computed by
  \begin{equation*}
   \minprob{S \in \bbR^{d \times n}}{\|H - S^T S H\|.}
  \end{equation*}
  This suggests that sketched Newton-type methods behave best, when the sketching
  mapping behaves like the PCA of the Newton differential.
\end{remark}

We apply the sketched Newton-type method to the problem of denoising tubulin
images coming from~\cite{BenAlaTamShrRozProMur17Cont,LukChaSheMal}. This is a
multicriteria
optimization problem, with the state space consisting of $m_1 \times m_2$ real matrices.
We identify this space with $\bbR^{m_1 m_2}$. Let $o \in \bbR^{m_1 \times m_2}$ be the original
image.
The two objectives are given by the functions $n: \bbR^{m_1 \times m_2} \to \bbR$ and
$d:\bbR^{m_1 \times m_2} \to \bbR$.
The first function measures the noise level of the image and is defined as
\begin{equation*}
  n(x) = \|\nabla x\|^2 := \sum_{i = 2}^{m_1 - 1}\sum_{i = 2}^{m_1-1} {(x_{i,j+1} - x_{i,j-1})}^2
  + {(x_{i+1,j} - x_{i-1,j})}^2,
\end{equation*}
and the second one measures the distance to the original image,
$d(x) = \|x - o\|_F^2$.
To associate to it a scalar optimization problem that can be solved using the
approach proposed in this paper, we employ a
scalarization strategy, yielding
\begin{equation*}
  \min_{x \in \bbR^{m_1 \times m_2}}{n(x) + \alpha d(x)}
\end{equation*}
for some parameter $\alpha \in (0, \infty)$. How such a parameter $\alpha$ is chosen belongs to
multicriteria optimization, and is beyond the scope of this work. For the
interested reader, the author recommends~\cite{KhaZal15Setv,BotGraWan09Dual} for
a reference on how to choose such a parameter.
Both functions are smooth, so we can use the gradient and the Hessian in
implementation of the algorithm. Our framework is then required in order to handle
the randomness steaming from the sketching approximation.

In Figure~\ref{fig:sketched newton} we see the convergence of the algorithm when
$d$, the dimension from the Johnson-Lindenstrauss Embedding Lemma~\ref{lemma:jl},
varies through different proportions of the full dimension $m_1m_2$. We also
see a Newton-type method without sketching for comparison. The
sketching distribution, $\mathcal{S}$, from which $S$ is drawn is $\mathcal{N}(0, \Id)$
in Subfigure~\ref{fig:sketched a} and $\Unif \Pi(n,d)$
in Subfigure~\ref{fig:sketched b}. Here the set $\Pi(n,d)$ represents the canonical
projection matrices, i.e.
\begin{equation*}
  \Pi(n,d) = \left\{(p_{ij}) ~\left|~ \forall i \le d,j \le n,\quad p_{ij} \in \{0, 1\},\quad \sum_{j=0}^n p_{ij} = 1 \right.\right\}.
\end{equation*}

\begin{figure}[ht!]
  \begin{subfigure}{0.97\textwidth}
    \begin{adjustbox}{width=\textwidth}%
% [inline block 1: 2 envs, 59266 chars -> data_tex | \begin{tikzpicture}[/tikz/background rectangle/.style={fill={rgb,1:red,1.0;green,1.0;blue,1.0}, fill opacity={1.0}, draw...]

    \end{adjustbox}
    \caption{The probability distribution of the embedding matrices is $\Unif \Pi(n,d)$}%
    \label{fig:sketched b}
  \end{subfigure}
  \caption{The step size of the sketched Newton algorithm with the dimension
    of the embedding space given as a percentage of the full space compared to
    a standard Newton method}%
  \label{fig:sketched newton}
\end{figure}
\begin{remark}
  Using a sketching matrix from $\Pi(n,d)$ corresponds to selecting $d$ elements
  from the gradient and $d^2$ from the Hessian.
\end{remark}

\section{Conclusions}
Contrary to the standard view in the machine learning community, \lparan{}Quasi-\rparan{}Newton
methods can be successfully employed in big data environments. We have seen
that the fast convergence of these methods can remove uncertain behavior
due to inner optimization loops, thus the dynamics of the system can be interpreted
from a purely statistical perspective.
This paper shows that a backtracking approach can be used to maintain strong
guarantees on the behavior of Newton-type methods. As a future research
direction different backtracking strategies can be investigated.
Finally, our approach only handles randomness in the Hessian approximation.
While some structured noise in the objective can be seen as noise in the
Hessian approximation, not all random estimators of the objective can
easily be treated by our methods. As such, an investigation of a fully
stochastic method, with a stochastic objective along the lines presented here
would be of great interest.

\section{Acknowledgments}
This work is partially the PhD thesis of the author, completed at
the University of G\"ottingen under the supervision of D. Russell Luke. The author
would like to thank D. Russell Luke for his guidance and advice during the
undertaking of the PhD. The author would also like to thank his
postdoc mentor, Sorin-Mihai Grad. This work has been partially founded
by ANR-22-EXES-0013.

\end{document}